\DeclareRobustCommand{\SkipTocEntry}[5]{} % 4 if hyperref is not used
\newcommand{\C}{\mathbb{C}}
\newcommand{\N}{\mathbb{N}}
\newcommand{\R}{\mathbb{R}}
\newcommand{\ca}{\mathcal{A}}
\newcommand{\cb}{\mathcal{B}}
\newcommand{\cd}{\mathcal{D}}
\newcommand{\cf}{\mathcal{F}}
\newcommand{\cg}{\mathcal{G}}
\newcommand{\ch}{\mathcal{H}}
\newcommand{\cs}{\mathcal{S}}
\def\rond{\mathscr}
\newcommand{\rb}{\rond{B}}
\def\rmc{\mathrm{c}}
\def\d{\mathrm{d}}
\def\rme{\mathrm{e}}
\def\rmi{\mathrm{i}}
\def\i{\mathrm{i}}
\def\jap#1{\langle {#1} \rangle}
\def\qed{\hfill $\Box$\medskip}
\newtheorem{theorem}{Theorem}[section]
\newtheorem{lemma}[theorem]{Lemma}
\newtheorem{proposition}[theorem]{Proposition}
\newtheorem{definition}[theorem]{Definition}
\numberwithin{equation}{section}
\begin{document}

\title[LAP at zero energy]{On the Limiting absorption principle at zero energy for a new class of possibly non self-adjoint Schr\"odinger operators 
  \\[2mm]
  {\tiny \today}
}

\author[A. Martin]{Alexandre Martin} \address{A. Martin,
  D\'epartement de Math\'ematiques, Universit\'e de Cergy-Pontoise,
  95000 Cergy-Pontoise, France}
\email{alexandre.martin@u-cergy.fr }

\begin{abstract}
We recall a Moure theory adapted to non self-adjoint operators and we apply this theory to Schr\"odinger operators with non real potentials, using different type of conjugate operators. We show that some conjugate operators permits to relax conditions on the derivatives of the potential that were required up to now.
\end{abstract}

\maketitle
\tableofcontents

\section{Introduction}

In this article, we will study the Schr\"odinger operator $H=\Delta+V$ with possibly a non-real potential, on $L^2(\R^n)$, where $\Delta$ is the non negative Laplacian operator. Here $V$ is a multiplication operator, i.e. $V$ can be the operator of multiplication by a function or by a distribution of strictly positive order. When $V=0$, we know that $H=\Delta$ on $L^2(\R^n)$ has for spectrum the real set $[0,+\infty)$ with purely absolutely continuous spectrum on this set. In this article, we are always in the application framework of the Weyl's Theorem. In particular, the essential spectrum of $H$ is the same that the essential spectrum of $\Delta$, the interval $[0,+\infty)$. Thus, $0$ is the bound of the essential spectrum and, for this reason, $0$ is a threshold for $H$. Here, we are interested in the nature of the essential spectrum of the perturbed operator and in the existence of a Limiting Absorption Principle near the threshold $0$.

A general technique to prove a Limiting Absorption Principle if $H$ is self-adjoint is due to E. Mourre \cite{Mo1} and it involves a local version of the positive commutator method due to C.R. Putnam \cite{Pu1,Pu2}. This method is based on the research of another self-adjoint operator $A$, named the conjugate operator, for which the operator $H$ is "regular" with respect to $A$ and for which the Mourre estimate is satisfied on a set $I$ in the following sense:
\[E(I)[H,iA]E(I)\geq c_0E(I)+K,\]
where $c_0>0$, $E$ is the spectral mesure of $H$ and $K$ is a compact operator. When $H$ is a Schr\"odinger operator, we usually apply the Mourre theory with the generator of dilations $A_D$ as conjugate operator. With this conjugate operator, we obtain for the first order commutator of the Laplacian $[\Delta,iA_D]=2\Delta$. In particular, by considering potential such that $H$ is a compact perturbation of the Laplacian, and under some assumptions on it, we can prove the Mourre estimate if $I$ is a compact interval of $(0,+\infty)$. This implies a Limiting Absorption Principle on all compact interval of $(0,+\infty)$ (see \cite{ABG}). But, we can see that since $E(I)[\Delta,iA_D]E(I)$ is not strictly positive when $0\in I$, we can not use the Mourre theorem to prove a Limiting Absorption Principle at zero energy. To do this, several methods linked to Mourre theory exist. A first method uses the standard Mourre theory with a parameter. The goal is to obtain a Limiting Absorption Principle for a modified operator which depends on the parameter and to deduce from this Limiting Absorption Principle a similar estimate for the initial operator, without the parameter (see \cite{BHa}). A second method is to show a Limiting Absorption Principle with weights which depends on a parameter and to deduce from this a Limiting Absorption Principle for our operator (see \cite{FoSk}).  Here, we will use a third method which is, contrary to the others, a general method: the method of the weakly conjugate operator. With this method, we do not have to assume that the first order commutator is strictly positive but only positive and injective (see \cite{MaRi, Ri, BoGo}). %Moreover, this general method permits to prove a Limiting Absorption Principle near the threshold even if the operator is not a differential operator nor a self-adjoint operator.

Let $k\in\N^*$, $k\leq n$, and consider the decomposition $\R^n=\R^k\times \R^{n-k}$. With this decomposition, denote $(x,y)\in\R^n$ where $x\in\R^k$ and $y\in\R^{n-k}$. For $h\in C^\infty(\R^n)$, denote 
\[\nabla_x h(x,y)=\biggl(\partial_i h(x,y)\biggr)_{i=1,\cdots,k}.\]
 In \cite{MaRi}, using the method of weakly conjugate operator, M. M{\u{a}}ntoiu and S. Richard proved the following
\begin{theorem}[\cite{MaRi}, Theorem II.2]\label{th: MaRi}
Let $k\geq 2$. Let $H=\Delta+V$ whith $V\in C^\infty(\R^n)\cap L^\infty(\R^n)$ a real potential. Assume that
\begin{enumerate}
\item $x\cdot\nabla_x V\in L^\infty(\R^n)$.

\smallskip

\item For all $(x,y)\in\R^n$, $-x\cdot \nabla_x V(x,y)\geq 0$.

\smallskip

\item There is a constant $c$ such that, for all $(x,y)\in\R^n$,
\[\left| (x\cdot\nabla_x)^2 V (x,y)\right|\leq -c x\cdot \nabla_x V(x,y).\]
\end{enumerate}
Then, there exists a Banach space $\ca\subset L^2(\R^n)$ such that $\|(H-\lambda\pm \rmi \nu)^{-1}\|_{B(\ca,\ca^*)}$ is bounded uniformly in $\lambda\in\R$, $\nu>0$, where $\ca^*$ is the dual space of $\ca$.
\end{theorem}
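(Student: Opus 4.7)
The plan is to apply the method of the weakly conjugate operator with a conjugate operator adapted to the fact that the hypotheses on $V$ involve only derivatives in the first $k$ variables. The natural choice is the generator of dilations restricted to the $\R^k$ factor,
\[A=\tfrac12\bigl(x\cdot P_x+P_x\cdot x\bigr),\qquad P_x=-\i\nabla_x,\]
for which $[\Delta_x,\i A]=2\Delta_x$, $[\Delta_y,\i A]=0$, and $[w(x,y),\i A]=-x\cdot\nabla_x w$ acts as multiplication for any smooth bounded function $w$.

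The first step is to compute
\[M:=[H,\i A]=2\Delta_x+(-x\cdot\nabla_x V).\]
Hypothesis (1) makes $-x\cdot\nabla_x V$ a bounded function, hypothesis (2) ensures it is nonnegative, and $\Delta_x\geq 0$, so $M\geq 0$ as a quadratic form on $H^1(\R^n)$. Since $\Delta_x$ is injective on $L^2(\R^n)$, so is $M$, and one can form the Banach space $\ca\subset L^2(\R^n)$ associated to the positive form of $M$ as in the weakly conjugate operator framework. The hypothesis $k\geq 2$ enters essentially at this point, via a Hardy-type inequality that guarantees the completion in the $M$-norm is well-behaved. The $C^2(A)$-type regularity of $H$ needed to justify every commutator follows directly from (1) and (3).

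Next I would check the decisive relative bound on the second commutator. Applying the rule $[w,\i A]=-x\cdot\nabla_x w$ once more,
\[[M,\i A]=4\Delta_x+(x\cdot\nabla_x)^2 V,\]
so hypothesis (3) combined with the nonnegativity in (2) yields
\[\pm[M,\i A]\leq 4\Delta_x+c(-x\cdot\nabla_x V)\leq \max(2,c)\,M.\]
This is precisely the form bound at the core of the weakly conjugate operator method. The abstract theorem of the method (as developed in \cite{BoGo,MaRi,Ri}) then produces the uniform estimate $\sup_{\lambda\in\R,\,\nu>0}\|(H-\lambda\pm\i\nu)^{-1}\|_{B(\ca,\ca^*)}<\infty$.

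The main obstacle, in my view, is not the commutator algebra, which is essentially dictated by the hypotheses, but the functional-analytic setup: constructing $\ca$ from the positive injective (but not bounded below) operator $M$ and transferring the form bound $\pm[M,\i A]\leq CM$ into a bounded operator $\ca\to\ca^*$. This is where the threshold nature of $0$ and the dimensional constraint $k\geq 2$ really bite; once this framework is secured, the rest of the argument reduces to a direct application of the abstract machinery.
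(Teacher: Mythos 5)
First, a point of comparison: the paper does not prove Theorem \ref{th: MaRi} at all --- it is quoted from \cite{MaRi}, and the only indication given about its proof is that it uses the method of the weakly conjugate operator, i.e.\ the abstract Theorem \ref{th: Theorem B.1}. Your proposal follows exactly that strategy, with the conjugate operator $A=\frac12(x\cdot P_x+P_x\cdot x)$ acting only in the first $k$ variables (the choice discussed in Section \ref{s:dim 1}); your commutator computations $[H,\rmi A]=2\Delta_x-x\cdot\nabla_x V=:M$ and $[M,\rmi A]=4\Delta_x+(x\cdot\nabla_x)^2V$ are correct, and so is the deduction $\pm[M,\rmi A]\leq\max(2,c)\,M$ from hypotheses (2)--(3). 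So the route is the intended one and the commutator algebra is sound.

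Two points are, however, genuine gaps rather than routine omissions. First, you never construct $\ca$ nor verify the hypotheses of the abstract theorem that are not pure commutator identities: the closability needed to form the completion $\cs$ of $\cg$ under $\|S^{1/2}\cdot\|$ with $S=M$, the invariance of $\cg$ and $\cs$ under $e^{\rmi tA}$ (needed for the $C^2(A,\cg,\cg^*)$ regularity you assert "follows directly"), and the estimate \eqref{eq: B.7}. Moreover, by \eqref{eq: B.11} the space on which the uniform resolvent bound holds carries the norm $\|S^{-1/2}f\|+\|S^{-1/2}Af\|$, so describing $\ca$ as "the Banach space associated to the positive form of $M$" is not accurate: the conjugate operator $A$ enters the definition of $\ca$ itself. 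You identify this block as "the main obstacle" and then leave it entirely to the abstract machinery, but it is precisely the part that the abstract theorem does not do for you. Second, your explanation of where $k\geq2$ enters --- "via a Hardy-type inequality" --- cannot be right as stated: the Hardy inequality used throughout this paper requires dimension at least $3$, and for $k=2$ the space defined by $\|\nabla_x f\|$ is already delicate. The constraint $k\geq2$ in \cite{MaRi} has to be traced through the construction of $\cs$ and of the weights $\|S^{-1/2}\cdot\|$, not through Hardy; identifying and justifying this is part of the functional-analytic work that is missing from the proposal.
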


These conditions do not permit to cover some situations: in fact, assumption (2) does not allow to have an oscillating potential of the form $V(x)=\sin(|x|^2)e^{-|x|^2}$. Moreover, because of the derivative, for an oscillating potential, $(x\cdot\nabla_x)^2 V$ can be unbounded. In this article, we will use the abstract result of the method of the weakly conjugate operator with different type of conjugate operators.

A first conjugate operator we use is the operator $A_F$ defined by 
\[A_F=\frac{1}{2}(p\cdot F(q)+F(q)\cdot p)\]
with $F$ a $C^\infty$ vector field with some good properties. Remark that this type of conjugate operator was already used by R. Lavine in \cite{La1,La2,La3,ABG}. This conjugate operator permits to apply the method of the weakly conjugate operator to potentials for which the derivative does not have enough decay at infinity. With this conjugate operator, we can prove the following:
\begin{theorem}\label{th: A_F intro}
Let $n\geq3$ and $0\leq\mu<(1+\frac{n}{n-2})^{-2}$.
Let $V_1,V_2\in L^1_{loc}(\R^n,\R)$ and $H=\Delta+V_1+\rmi V_2$. Assume that
\begin{enumerate}
\item $V_k$ are $\Delta$-compact and $V_2\geq0$;

\item $q\jap{q}^{-\mu}\cdot\nabla V_k$ are $\Delta$-compact;

\item There is \[C>-\frac{(n-2)^2(1-\mu(1+\frac{n}{n-2})^{2})}{2}\] such that $-x\cdot\nabla V_1(x)\geq \frac{C}{|x|^2}$ for all $x\in\R^n$;

\item There is $C'>0$ such that for all $x\in\R^n$, $|(x\jap{x}^{-\mu}\cdot\nabla)^2 V_k(x)|\leq C'|x|^{-2}\jap{x}^{-\mu}$.
\end{enumerate}
Then
\[\sup_{\lambda\in\R,\eta>0}\|\jap{q}^{-\mu/2}|q|^{-1}(H-\lambda+\rmi \eta)^{-1}|q|^{-1}\jap{q}^{-\mu/2}\|<\infty.\]

Moreover, $H$ does not have eigenvalue in $\R$.
\end{theorem}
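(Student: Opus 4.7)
My plan is to apply the abstract weakly conjugate operator theorem in its non-self-adjoint variant (cf.\ \cite{BoGo,Ri,MaRi}) with the concrete conjugate operator $A_F=\frac{1}{2}(p\cdot F(q)+F(q)\cdot p)$ and the vector field $F(x)=x\jap{x}^{-\mu}$. This choice is forced by the hypotheses: the weight $\jap{x}^{-\mu}$ makes $F\cdot\nabla V_k=x\jap{x}^{-\mu}\cdot\nabla V_k$ precisely the quantity controlled in (2)--(4), and the restriction $\mu<(1+n/(n-2))^{-2}$ is the sharp threshold at which the principal part of $[\Delta,iA_F]$ remains strictly positive after invoking Hardy's inequality.

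First I would verify regularity of $H$ with respect to $A_F$. Hypothesis (1) together with $V_2\geq 0$ makes $H$ a quasi-$m$-accretive, relatively compact perturbation of $\Delta$, hence the generator of a $C_0$-semigroup on $L^2(\R^n)$. The $\Delta$-compactness of $q\jap{q}^{-\mu}\cdot\nabla V_k$ in (2) and the pointwise bound in (4) then place $(H,A_F)$ in a $C^2$-type regularity class so that the iterated commutator forms $[H,iA_F]$ and $[[H,iA_F],iA_F]$ extend to bounded maps between the appropriate scale spaces associated with $\Delta$.

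Next I would compute the first-order commutator. With $F_j(x)=x_j\jap{x}^{-\mu}$ the symmetrised gradient reads $\partial_j F_k+\partial_k F_j=2\delta_{jk}\jap{x}^{-\mu}-2\mu x_jx_k\jap{x}^{-\mu-2}$, giving schematically
\[[\Delta,iA_F]=2\,p\cdot\jap{q}^{-\mu}p\;-\;2\mu\,(p\cdot q)\jap{q}^{-\mu-2}(q\cdot p)\;+\;\text{l.o.t.}\]
Using Hardy's inequality $p^2\geq\frac{(n-2)^2}{4|q|^2}$ together with its radial companion bounding $(q\cdot p)\jap{q}^{-2}(q\cdot p)$ by a $(1+n/(n-2))^2$-multiple of $p^2$, the net positive principal contribution dominates $\frac{(n-2)^2}{2}(1-\mu(1+n/(n-2))^2)\,\jap{q}^{-\mu}|q|^{-2}$. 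Adding the self-adjoint commutator $[V_1,iA_F]=-x\jap{x}^{-\mu}\cdot\nabla V_1\geq C\jap{q}^{-\mu}/|q|^2$ from (3), the strict lower bound on $C$ is exactly what is needed so that the self-adjoint part $B:=\mathrm{Re}[H,iA_F]$ is strictly positive and injective, with $B\gtrsim\jap{q}^{-\mu}|q|^{-2}$ as a quadratic form. The second commutator involves $(x\jap{x}^{-\mu}\cdot\nabla)^2 V_k$, controlled by (4), and is $B$-form-bounded after one more application of Hardy.

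The main obstacle will be the non-self-adjoint piece: the anti-self-adjoint part of $[H,iA_F]$ is $-ix\jap{x}^{-\mu}\cdot\nabla V_2$, with no a priori sign, and must be handled as a relative perturbation of $B$. Hypothesis (2) combined with Hardy gives that this term is $B$-form-bounded, so the abstract weakly conjugate operator theorem in the non-self-adjoint setting yields
\[\sup_{\lambda\in\R,\,\eta>0}\bigl\|B^{-1/2}(H-\lambda+i\eta)^{-1}B^{-1/2}\bigr\|<\infty,\]
which via the pointwise comparison $B^{-1/2}\lesssim|q|^{-1}\jap{q}^{-\mu/2}$ produces the stated estimate. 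Absence of real eigenvalues then follows from a virial identity combined with dissipativity: if $Hu=\lambda u$ with $\lambda\in\R$, the sign condition $V_2\geq 0$ forces $V_2^{1/2}u=0$, and a standard regularisation of the virial computation for $A_F$ yields $\langle u,Bu\rangle=0$; the strict positivity of $B$ then gives $u=0$.
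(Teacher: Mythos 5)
Your proposal follows essentially the same route as the paper: the conjugate operator $A_F$ with $F(x)=x\jap{x}^{-\mu}$, the computation of $[\Delta,\rmi A_F]$ whose negative part is absorbed via Hardy's inequality under exactly the condition $\mu<(1+\frac{n}{n-2})^{-2}$, the choice $S\sim\jap{q}^{-\mu/2}\Delta\jap{q}^{-\mu/2}$ with the lower bound on $C$ in hypothesis (3) calibrated to keep $\mathrm{Re}[H,\rmi A_F]\geq S>0$, control of the second commutator by hypothesis (4), and then the abstract non-self-adjoint weakly conjugate operator theorem (Theorem \ref{th: Theorem B.1} with $c_1=0$). The only blemish is the final written comparison ``$B^{-1/2}\lesssim|q|^{-1}\jap{q}^{-\mu/2}$'', which has the inequality inverted; what is actually needed (and what you correctly state earlier) is the form bound $B\gtrsim\jap{q}^{-\mu}|q|^{-2}$, which makes $S^{-1/2}|q|^{-1}\jap{q}^{-\mu/2}$ bounded and yields the stated weighted resolvent estimate.
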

We make few remarks about this theorem:
\begin{enumerate}
\item If $0<\mu$, we do not require to have $q\cdot\nabla V_i$ bounded.

\item Let $V_1(x)=\jap{q}^{-\alpha}$, $\alpha>0$ and $V_2(x)=\jap{x}^{-\beta}$, $\beta>0$. If we want to use the generator of dilations with these potentials, we can see that we have to assume that $\alpha,\beta\geq2$ to use the method of the weakly conjugate operator (see \cite[Theorem C.1]{BoGo}). Here, we only have to assume that there is $0\leq\mu<(1+\frac{n}{n-2})^{-2}$ such that $\alpha+\mu\geq2$ and $\beta+\mu\geq2$. In particular, if $\alpha>2-(1+\frac{n}{n-2})^{-2}$ and $\beta>2-(1+\frac{n}{n-2})^{-2}$, then Theorem \ref{th: A_F intro} applies.
\end{enumerate}

Another conjugate operator we use is the operator $A_u$ defined by
\[A_u=\frac{1}{2}(q\cdot u(p)+u(p)\cdot q)\]
where $u$ is a $C^\infty$ vector field with some good properties. Remark that this conjugate operator was already used in \cite{ABG}. Moreover, it turns out that this type of conjugate operator is particulary usefull when the potential has high oscillations because conditions on commutators does not impose derivatives (see \cite{Ma2,Ma1}). Using this conjugate operator, we obtain the following:
\begin{theorem}\label{th:sr cond}
Let $n\geq3$.
Let $V_1,V_2\in L^1_{loc}(\R^n,\R)$ and $H=\Delta+V_1+\rmi V_2$ with $V_2\geq0$.
Assume that $|q|^2V_1$ and $|q|V_1$ are bounded with bound small enough and that $\jap{q}^3V_i$ is bounded.
Then Theorem \ref{th: Theorem B.1} applies. In particular, for all $1\leq\mu<2$,
\[\sup_{\rho\in\R,\eta>0}\|\jap{p}^{-\mu/2}|q|^{-1}(H-\rho+\rmi \eta)^{-1}|q|^{-1}\jap{p}^{-\mu/2}\|<\infty.\]

Moreover, $H$ does not have eigenvalue in $\R$.
\end{theorem}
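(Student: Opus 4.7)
The plan is to apply the abstract weakly conjugate operator theorem (Theorem B.1 of the appendix) to the pair $(H,A_u)$ for a suitable vector field $u$. The natural choice, suggested by the weight $\jap{p}^{-\mu/2}$ in the LAP, is
\[u(\xi)=\xi\jap{\xi}^{-\mu},\qquad A_u=\tfrac{1}{2}(q\cdot u(p)+u(p)\cdot q).\]
First I would check that $A_u$ is essentially self-adjoint on $\cs(\R^n)$ (the symbol $u$ is smooth and bounded with bounded derivatives, so this follows from standard pseudodifferential arguments) and that $H$, being dissipative because $V_2\geq 0$, generates a contraction semigroup.

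The heart of the proof is the positivity/injectivity of the first order commutator. Using $[p^2,q_j]=-2\rmi p_j$ one finds, purely at the kinetic level,
\[[\Delta,\rmi A_u]=2\,p\cdot u(p)=2\,p^2\jap{p}^{-\mu}.\]
This is nonnegative, and bounded below by a multiple of $\jap{p}^{-\mu}p^2$. Combined with Hardy's inequality $p^2\geq c_n|q|^{-2}$ (valid for $n\geq 3$), this yields a lower bound of the form $c\,|q|^{-1}\jap{p}^{-\mu}|q|^{-1}$, which is exactly the quadratic form the conclusion requires. Next I would compute
\[[V_j,\rmi A_u]=\tfrac{1}{2}\bigl(q\cdot[V_j,\rmi u(p)]+[V_j,\rmi u(p)]\cdot q\bigr),\]
and estimate the Fourier-multiplier commutator $[V_j,u(p)]$ by the Helffer--Sj\"ostrand formula or a direct kernel computation, using the boundedness of $\jap{q}^3 V_j$ to get sufficient decay and of $u$ and its derivatives to handle the momentum side. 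The resulting operator behaves like $|q|^{-1}(\text{bounded})\jap{p}^{-\mu}|q|^{-1}$ (with factors $|q|V_j$ and $|q|^2 V_j$ appearing thanks to the $q$ factors on either side). At this stage the smallness hypothesis on $\||q|V_1\|_\infty$ and $\||q|^2V_1\|_\infty$ is used to absorb the $V_1$ contribution into the dominant positive term coming from $\Delta$; the $V_2$ contribution is dealt with similarly, but here $V_2\geq 0$ actually gives a sign helping us since the abstract theorem only needs a lower bound on $\mathrm{Re}\,[H,\rmi A_u]$.

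To invoke Theorem B.1 I must also verify the regularity of $H$ with respect to $A_u$ up to the order required by the abstract result (typically $C^2$ in the Mourre sense). This means controlling the iterated commutator $[[H,\rmi A_u],\rmi A_u]$. Because $A_u$ differentiates in the momentum variable, applying it to $V_j$ never produces derivatives of $V_j$; it only produces $q$-factors and further Fourier-multiplier commutators. The pointwise bound $\jap{q}^3 V_j\in L^\infty$ provides exactly the three powers of $q$ needed to absorb the two factors of $q$ appearing after two commutators while still leaving a $|q|^{-1}\cdots|q|^{-1}$ weight. Injectivity of the positive commutator operator follows from the strict positivity of $p^2\jap{p}^{-\mu}$ on $\cs(\R^n)\setminus\{0\}$ once the small $V_1$ perturbation has been absorbed.

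Finally, Theorem B.1 delivers both the weighted resolvent bound and the absence of eigenvalues in $\R$: any would-be eigenfunction would lie in the kernel of the positive and injective first-order commutator form, which is trivial. The main obstacle I foresee is the second step — propagating the $|q|^{\pm 1}$ and $\jap{p}^{-\mu}$ weights through the Fourier-multiplier commutator $[V_j,u(p)]$ and showing, quantitatively and with explicit constants, that the smallness thresholds on $\||q|V_1\|_\infty$ and $\||q|^2V_1\|_\infty$ suffice to dominate this perturbation by $2p^2\jap{p}^{-\mu}$; everything else (regularity, injectivity, absence of eigenvalues) should follow routinely once this key estimate is in hand.
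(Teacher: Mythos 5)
Your proposal is correct and follows essentially the same route as the paper: choose $u(\xi)=\xi\jap{\xi}^{-\mu}$ so that $[\Delta,\rmi A_u]=2\Delta\jap{p}^{-\mu}$, take $S$ to be a multiple of this, estimate $[V_j,\rmi A_u]$ and the iterated commutator via the Helffer--Sj\"ostrand formula and Hardy's inequality (using $|q|V_1$, $|q|^2V_1$ small and $\jap{q}^3V_i$ bounded), and then invoke Theorem \ref{th: Theorem B.1} with $c_1=0$. The only detail you gloss over is that the restriction $\mu\geq1$ in the statement comes from the $C^2(A_u,\ch^2,L^2)$ regularity argument cited from \cite{Ma1}, but otherwise the plan matches the paper's proof.
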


We will make few remarks about Theorem \ref{th:sr cond}
\begin{enumerate}
\item Let $W_1,W_2$ potentials such that $\jap{q}^3W_i$ is bounded and $W_2\geq0$. Let $w\in\R$. Then $V_1=w W_1$ and $V_2=W_2$ satisfy assumptions of Theorem \ref{th:sr cond} for $|w|$ small enough. 

\smallskip

\item In the case $V_2=0$ and $V_1\in L^{n/2}$, the absence of negative eigenvalues can be proved with an other method: using the Lieb-Thirring inequality (see \cite{Li}), we already know that if $V_1$ is small enough, the number of negative eigenvalue have to be $0$.

\smallskip

\item Using Sobolev inequalities and taking $u(x)=x\jap{x}^{-\mu}$, $1<\mu<2$, we can replace the assumption $\jap{q}^{3}V$ bounded by $\jap{q}^2V$ bounded and $x\mapsto|x|^3V(x)\in L^p$ with $ p\geq\frac{n}{\mu-1}$
%To see that, we can remark that, in the expression of $[[V,\rmi A_u],\rmi A_u]$, we only have to assume that $|q|^3V u(p)$ et $u(p)|q|^3V$ are bounded
(see \cite[Corollary 5.9]{Ma1}). 

\smallskip

\item Since assumptions on the potential does not impose conditions on the derivatives of the potential, we can use this result with potentials which have high oscillations. For example, if $V_2=0$ and $V_1(x)=w(1-\kappa(|x|))\frac{\sin(k|x|^\alpha)}{| x|^\beta}$ with $w,k,\alpha\in\R$, $\beta>0$ and $\kappa\in C^\infty_c(\R,\R)$ such that $\kappa=1$ on $[-1,1]$ and $0\leq \kappa\leq 1$, it suffices to suppose that $w$ is small enough and that $\beta\geq3$ to obtain a Limiting Absorption Principle on all $\R$. Remark that because of the oscillations, Theorem C.1 of \cite{BoGo} does not apply nor Theorem 1.1 of \cite{FoSk}.

\smallskip

\item Notice that the absence of eigenvalue was also proved for this type of potential. For example, we can see in \cite[Theorem 1 and Theorem 2]{FKV} that is is sufficient in dimension $n\geq3$ to assume that $|x|^2 V$ is bounded with bound small enough to prove the absence of eigenvalue. Here, we suppose more decay on the potential $V$ and the absence of real eigenvalue is only a consequence of the obtention of a Limiting Absorption Principle on all the real axis.
\end{enumerate}

The paper is organized as follows. In Section \ref{s:notation}, we will give some notations we will use below and we recall some basic fact about regularity with respect to a conjugate operator. In Section \ref{s:theoric}, we will recall the abstract result corresponding to Theorem \ref{th: MaRi}. In Section \ref{s:A_D}, we will recall a result concerning the application of the method of the weakly conjugate operator with the generator of dilations as conjugate operator, and we will see that, with this conjugate operator, we can avoid conditions on the second order derivatives. In Sections \ref{s:A_F} and \ref{s:A_u}, we will use the method of the weakly conjugate operator with $A_F$ and $A_u$ as conjugate operator. In Section \ref{s:dim 1}, we will see that we can use a conjugate operator which is a differential operator only in certain directions. In Section \ref{s:oscillant}, we will give examples of potentials for which our previous results apply. In Appendix \ref{s: H-S formula}, we recall the Helffer-Sjostrand formula and some properties of this formula we will use in the text.

\section{Notation and basic notions}\label{s:notation}

\subsection{Notation}
Let $X=\R^n$ and for $s\in\R$ let $\ch^s$ be the usual Sobolev
spaces on $X$ with $\ch^0=\ch=L^2(X)$ whose norm is denoted
$\|\cdot\|$. We are mainly interested in the space $\ch^{1}$ defined
by the norm $\|f\|_1^2=\int\left(|f(x)|^2+|\nabla f(x)|^2\right)\d x$ and
its dual space $\ch^{-1}$.  

%Recall that $\ch^s$ is defined by the norm $\|\langle p \rangle^s f\|$
%where $\langle p \rangle=(1+p^2)^{1/2}$.  For real $s,t$ we denote
%$\ch^s_t$ the space defined by the norm
%$\|\langle q \rangle^t f\|_{\ch^s} =\|\langle p \rangle^s\langle q
%\rangle^t f\|$.

%For $s\in\R$ let $\ch^s$ be the usual Sobolev spaces on $X$, in
%particular $\ch^0=\ch$.  Recall that $\ch^s$ is defined by the norm
%$\|\langle p \rangle^s f\|$ where $\langle p \rangle=(1+p^2)^{1/2}$.
%For real $s,t$ we denote $\ch^s_t$ the space defined by the norm
%$\|\langle q \rangle^t f\|_{\ch^s} =\|\langle p \rangle^s\langle q
%\rangle^t f\|$.

We denote $q_j$ the operator of multiplication by the coordinate $x_j$, $p_j=-\rmi \partial_j$ and we denote $p=(p_j)_{j=1,\cdots,n}$ and $q=(q_j)_{j=1,\cdots,n}$ considered as operators in $\ch$. For $k\in X$ we denote
$k\cdot q=k_1q_1+\dots+k_\nu q_n$. If $u$ is a
measurable function on $X$ let $u(q)$ be the operator of
multiplication by $u$ in $\ch$ and $u(p)=\cf^{-1}u(q)\cf$, where $\cf$
is the Fourier transformation:
\[
(\cf f)(\xi)=(2\pi)^{-\frac{\nu}{2}} \int \rme^{-\rmi x\cdot\xi} u(x) \d x .
\]
If there is no ambiguity we keep the same notation for these operators
when considered as acting in other spaces.  

We are mainly interested in potentials $V$ which are multiplication operators in the following general sense.

\begin{definition}\label{df:mult} 
A map $V\in\rb$ is called a \emph{multiplication operator} if $V\rme^{\i k\cdot q}=\rme^{\i k\cdot q}V$ for all $k\in X$. Or, equivalently, if $V\theta(q)=\theta(q)V$ for all $\theta\in C_\rmc^\infty(X)$.
\end{definition}

%For the proof of the equivalence, note first that from $V\rme^{\i k\cdot q}=\rme^{\i k\cdot q}V, \forall k\in X$ we get $V\theta(q)=\theta(q)V$ for any Schwartz test function $\theta$ because $(2\pi)^{\frac{\nu}{2}}\theta(q)= \int \rme^{\rmi kq} (\cf\theta)(k) \d k$ and second that if $\eta\in C^1(X)$ is bounded with bounded derivative then $\eta(q)$ is the strong limit in $\rb$ of a sequence of operators $\theta(q)$ with $\theta\in C_\rmc^\infty(X)$.

%As we mentioned in the introduction, such a $V$ is necessarily the operator of multiplication by a distribution that we also denote $V$ and we sometimes write the associated operator $V(q)$. For example, the distribution $V$ could be the divergence $\Div W$ of a measurable vector field $W:X\to X$ such that multiplication by the components of $W$ sends $\ch^1$ into $\ch$. For example, $W$ could be a bounded function and if this function tends to zero at infinity then $V$ will be a compact operator $\ch^1\to\ch^{-1}$. we say that a multiplication operator $V$ is $\Delta$-compact if $V:\ch^2\rightarrow L^2$ is a compact operator.

%We sometimes write $V=v(q)$ but recall that $v$ is not necessarily a
%function: it could, of course, be a locally integrable function, but
%it could also be a measure, or even a distribution of order $>0$. 

As usual $\jap{x}=\sqrt{1+|x|^2}$. Then $\jap{q}$ is the operator of
multiplication by the function $x\mapsto\jap{x}$ and
$\jap{p}=\cf^{-1}\jap{q}\cf$.  For real $s,t$ we denote $\ch^t_s$ the
space defined by the norm
\begin{equation}\label{eq:K}
\|f\|_{\ch^t_s}= \|\jap{q}^s f\|_{\ch^t}= \|\jap{p}^t \jap{q}^s f\|=\| \jap{q}^s\jap{p}^t f\| .
\end{equation}
Note that the dual space of $\ch^t_s$ may be identified with
$\ch^{-t}_{-s}$.

\subsection{Regularity}

Let $F', F''$ be two Banach space and $T:F'\rightarrow F''$ a bounded operator.

Let $A$ a self-adjoint operator.

Let $k\in\N$. we say that $T\in C^k(A,F',F'')$ if, for all $f\in F'$, the map $\R\ni t\rightarrow e^{itA}Te^{-itA}f$ has the usual $C^k$ regularity. The following characterisation is available:

\begin{proposition}
 $T\in C^1(A,F',F'')$ if and only if $[T,A]$ has an extension in $\cb(F',F'')$.%\\
 %Similarly, we say that $T\in C^1(A,\ch^1,\ch^{-1})$ if and only if $[T,A]$ has a extension in $\rb=B(\ch^1,\ch^{-1})$
\end{proposition}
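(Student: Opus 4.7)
The plan is to prove the two implications separately, with the forward direction being essentially a uniform boundedness argument and the backward direction relying on the fundamental theorem of calculus in the Banach space setting.

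$(\Rightarrow)$ Assume $T\in C^1(A,F',F'')$, so for every $f\in F'$ the orbit $W(t)f:=\rme^{\rmi tA}T\rme^{-\rmi tA}f$ is a $C^1$ curve in $F''$. Setting $Bf:=\frac{\rmd}{\rmd t}\big|_{t=0}W(t)f$ defines a linear map $B:F'\to F''$. The family $\{t^{-1}(W(t)-W(0)):0<|t|<1\}\subset\cb(F',F'')$ is pointwise convergent hence pointwise bounded, so by the uniform boundedness principle it is norm bounded; passing to the pointwise limit gives $B\in\cb(F',F'')$. On a dense subspace $\cd\subset F'$ of vectors $f$ such that $f$ lies in the domain of the generator of $\{\rme^{\rmi tA}\}$ acting on $F'$ and such that $Tf$ lies in the analogous domain for $F''$, the Leibniz rule for strongly continuous groups yields $Bf=\rmi(AT-TA)f=-\rmi[T,A]f$, so $\rmi B$ is a bounded extension of $[T,A]$.

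$(\Leftarrow)$ Assume $C\in\cb(F',F'')$ extends $[T,A]$. Define $\phi(t):=-\rmi\rme^{\rmi tA}C\rme^{-\rmi tA}\in\cb(F',F'')$; by strong continuity of the groups on $F'$ and $F''$, for each $f\in F'$ the map $t\mapsto \phi(t)f$ is continuous into $F''$. I would introduce
\[G(t)f:=Tf+\int_0^t \phi(s)f\,\rmd s,\]
understood as a Bochner integral in $F''$, and aim to prove $W(t)f=G(t)f$ for every $f\in F'$. Granted this identity, $W(\cdot)f$ is $C^1$ with derivative $\phi(t)f$, which is the conclusion. For $f\in\cd$, one differentiates $W(s)f$ directly, obtaining $\tfrac{\rmd}{\rmd s}W(s)f=\rme^{\rmi sA}(\rmi AT-\rmi TA)\rme^{-\rmi sA}f=\phi(s)f$, and the fundamental theorem of calculus gives $W(t)f=G(t)f$ on $\cd$; density of $\cd$ and joint continuity in $f$ of both sides then extend the identity to all of $F'$.

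The main obstacle is establishing that the subspace $\cd$ is genuinely dense and behaves well. In the Banach space setting, $A$ must be treated as the generator of a $C_0$-group on each of $F'$ and $F''$ separately, and there is no a priori reason that $T$ should send the domain of the generator in $F'$ into the domain in $F''$. The standard device to bypass this difficulty is mollification: for a smooth compactly supported approximate identity $\varphi_\epsilon$, the regularized operators $T_\epsilon:=\int\varphi_\epsilon(s)W(s)\,\rmd s$ automatically belong to $C^\infty(A,F',F'')$ and have explicit bounded commutators with $A$; one then passes to the limit $\epsilon\to 0$ to transfer the conclusion from $T_\epsilon$ to $T$ in both directions of the equivalence.
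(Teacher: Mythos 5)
The paper does not actually prove this proposition: it is recalled as a known characterisation (it is essentially Theorem 5.1.3 / 6.2.10 of \cite{ABG}), so I am measuring your argument against the standard proof. Your forward direction is essentially correct: pointwise convergence of the difference quotients plus the uniform boundedness principle yields a bounded limit $B$, and the identification $Bf=-\rmi[T,A]f$ does hold on a dense set. But you should justify the density rather than assert it: for $f\in\cd(A;F')$ one has $T\rme^{-\rmi tA}f=Tf-\rmi t\,TAf+o(t)$, and since $W(t)f$ is assumed differentiable at $t=0$ this forces $\rme^{\rmi tA}Tf$ to be differentiable, i.e.\ $Tf\in\cd(A;F'')$; hence your subspace $\cd$ contains all of $\cd(A;F')$. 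This argument is special to the forward direction.

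The converse contains a genuine gap, which you have correctly located but not closed. The set $\cd=\{f\in\cd(A;F'):\ Tf\in\cd(A;F'')\}$ can fail to be dense (it can even be trivial), so both the statement ``$[T,A]$ has a bounded extension'' and your density-plus-continuity step are only meaningful if the commutator is read as the sesquilinear form $\langle A^{*}\varphi,Tf\rangle-\langle\varphi,TAf\rangle$ on $\cd(A;F')\times\cd(A^{*};F''^{*})$. With that reading the Duhamel identity is proved weakly and no mollification is needed: for such $f,\varphi$ the scalar function $s\mapsto\langle \rme^{-\rmi sA^{*}}\varphi,\,T\rme^{-\rmi sA}f\rangle$ is $C^1$ with derivative $\langle\varphi,\phi(s)f\rangle$, the fundamental theorem of calculus gives $\langle\varphi,W(t)f-Tf\rangle=\bigl\langle\varphi,\int_0^t\phi(s)f\,\rmd s\bigr\rangle$, and weak-$*$ density of $\cd(A^{*};F''^{*})$ together with density of $\cd(A;F')$ finishes. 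Your mollification alternative is a legitimate standard device, but the sentence ``one then passes to the limit to transfer the conclusion'' hides the entire difficulty: a strong limit of $C^1$ operators need not be $C^1$, so one must pass to the limit inside the Duhamel formula for $T_\epsilon$, which requires showing that $[T_\epsilon,\rmi A]=-\int\varphi_\epsilon'(s)W(s)\,\rmd s$ is uniformly bounded in $\cb(F',F'')$ and converges to the given extension $C$ (note the naive bound $\|\varphi_\epsilon'\|_{L^1}\sup_s\|W(s)\|$ blows up like $\epsilon^{-1}$); that identification is obtained precisely by the dual-pairing integration by parts above. As written, the converse is a correct plan whose decisive step is missing.
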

It follows that, for $k>1$, $T\in C^k(A,F',F'')$ if and only if $[T,A]\in C^{k-1}(A,F',F'')$.

%If $T$ is not bounded, we say that $T\in C^k(A)$( respectively $C^k(A,\ch^1,\ch^{-1})$) if and only if there is $z\notin\sigma(T)$ such that $(T-z)^{-1}\in C^k(A)$ ( respectively $C^k(A,\ch^1,\ch^{-1})$). We can remark that if this property is true for one $z\notin\sigma(T)$ then it will be true for all $z\notin\sigma(T)$ (cf \cite[Lemma 6.2.1]{ABG}).

If $T$ is not bounded, we say that $T\in C^k(A,F',F'')$ if for one $z\notin\sigma(T)$, and thus for any $z\notin\sigma(T)$, $(T-z)^{-1}\in C^k(A,F',F'')$.

\begin{proposition}
For all $k>1$, we have
%\[C^{1}(A)\subset C^{1,1}(A)\subset C^{k}(A) \] and 
\[ C^{k}(A,F',F'')\subset C^{1,1}(A,F',F'')\subset C^{1}(A,F',F'').\]
\end{proposition}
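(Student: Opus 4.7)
The plan is to adopt the standard Besov-type characterisation that $T \in C^{1,1}(A, F', F'')$ iff the Dini integral
\[
\int_0^1 \|\e^{\rmi t A} T \e^{-\rmi t A} + \e^{-\rmi t A} T \e^{\rmi t A} - 2T\|_{\cb(F', F'')} \, \frac{\d t}{t^2}
\]
is finite, and then prove the two inclusions separately. Since $C^k(A, F', F'') \subset C^2(A, F', F'')$ for every integer $k \geq 2$, the first inclusion reduces at once to the case $k = 2$.

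For $C^2 \subset C^{1,1}$, assume $T \in C^2(A, F', F'')$ and set $f(t) := \e^{\rmi t A} T \e^{-\rmi t A}$. Then $f$ has a bounded second derivative in $\cb(F', F'')$ on any compact interval around $0$ (essentially $-[A,[A,T]]$), so Taylor's formula with integral remainder yields
\[
f(t) + f(-t) - 2T \;=\; \int_0^t (t - s)\bigl(f''(s) + f''(-s)\bigr) \, \d s,
\]
whence $\|f(t) + f(-t) - 2T\|_{\cb(F', F'')} \leq C t^2$. Dividing by $t^2$ and integrating over $(0,1)$ produces a finite integral, so $T \in C^{1,1}(A, F', F'')$.

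For $C^{1,1} \subset C^1$, the goal is to show that the forward difference quotients $(f(t) - T)/t$ admit a limit in $\cb(F', F'')$ as $t \to 0$; that limit is by definition $\rmi[A, T]$, and its existence is exactly the $C^1$ condition. The key algebraic identity
\[
f(2t) - 2 f(t) + T \;=\; \e^{\rmi t A}\bigl(f(t) + f(-t) - 2T\bigr)\e^{-\rmi t A},
\]
combined with
\[
\frac{f(2t) - T}{2t} - \frac{f(t) - T}{t} \;=\; \frac{1}{2t}\bigl(f(2t) - 2 f(t) + T\bigr),
\]
allows one, via the uniform boundedness of $\e^{\rmi t A}$ on the relevant spaces, to dominate the telescoping sum along a dyadic sequence $t_n = 2^{-n}$ directly by the Dini integral. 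This shows $\{(f(t_n) - T)/t_n\}_n$ is Cauchy in $\cb(F', F'')$, and its limit provides the bounded extension of $[T, \rmi A]$.

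The main obstacle is the second inclusion, which converts an integrability condition on the symmetric second difference into genuine differentiability of the orbit; this is the standard abstract argument in the spirit of Amrein--Boutet de Monvel--Georgescu, adapted to the two-Banach-space setting considered here.
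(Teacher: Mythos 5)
The paper states this proposition without proof: it is part of Section \ref{s:notation}, which merely \emph{recalls} standard facts from the regularity calculus of \cite{ABG} (Chapter 5), so there is no in-text argument to compare against. On the merits, your sketch of the first inclusion is fine: reducing $C^k\subset C^{1,1}$ to $k=2$ is legitimate, uniform boundedness of $f''$ on a neighbourhood of $0$ follows from strong $C^2$-regularity via Banach--Steinhaus, and the Taylor estimate $\|f(t)+f(-t)-2T\|\le Ct^2$ then makes the Dini integral converge.

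The second inclusion, however, has a genuine gap exactly where you say the telescoping is ``dominated directly by the Dini integral.'' Your algebraic identities are correct and give, with $g(t)=(f(t)-T)/t$ and $u(t)=\|f(t)+f(-t)-2T\|_{\cb(F',F'')}$, the bound $\|g(t)-g(2t)\|\le C\,u(t)/t$. But finiteness of $\int_0^1 u(t)\,t^{-2}\,\d t$ does \emph{not} imply $\sum_n u(2^{-n})/2^{-n}<\infty$ for a general nonnegative function $u$: the integral only controls averages of $u$ over dyadic blocks, and $u$ may spike at the points $2^{-n}$ while being small in between (one can construct $u$ with $\int_0^1 u(t)t^{-2}\,\d t<\infty$ yet $\sum_n u(2^{-n})/2^{-n}=\infty$). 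The function $u$ here is only \emph{strongly} continuous (the definition of $C^0(A,F',F'')$ used in the paper is a strong one), so no norm-continuity or quasi-monotonicity is available for free. Your identities do give a one-sided doubling $u(2t)\le Ku(t)$, but this goes the wrong way and does not rule out spikes on a zero-measure set. To close the gap you need an extra averaging step, e.g.\ integrate the dyadic base point $t_n(\lambda)=\lambda 2^{-n}$ over $\lambda\in[1,2]$ and invoke Fubini to find a good $\lambda$, or replace the pointwise difference quotients by mollified ones $T_\varepsilon=\varepsilon^{-1}\int_0^\varepsilon f(t)\,\d t$ and show the derivatives of the mollifications form a Cauchy family (this is, in spirit, the real-interpolation argument of \cite{ABG}). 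As written, the crucial step from integrability of the symmetric second difference to convergence of the difference quotients is asserted, not proved.
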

If $F'=F''=\ch$ is an Hilbert space, we note $C^1(A)=C^1(A,\ch,\ch^*)$. If $T$ is not bounded, $T$ is of class $C^1(A)$ if and only if $[T,\rmi A]:\cd(T)\rightarrow\cd(T)^*$ is bounded and, for some $z\in\C\backslash\sigma(T)$, the set $\{f\in\cd(A), R(z)f\in\cd(A)\text{ and }R(\bar{z})f\in\cd(A)\}$ is a core for $A$. Remark that, in general, because of the second assumption, it is more difficult to show that $T$ is of class $C^1(A)$ than to show that $T$ is of class $C^1(A,\cd(T),\cd(T)^*)$. This is not the case if we suppose that the unitary group generated by $A$ leaves $\cd(T)$ invariant.
For $T$ is self-adjoint, we have the following:

\begin{theorem}[Theorem 6.3.4 from \cite{ABG}]\label{th:634}
Let $A$ and $T$ be self-adjoint operator in a Hilbert space $\ch$. Assume that the unitary group $\{\exp(\rmi A\tau)\}_{\tau\in\R}$ leaves the domain $D(T)$ of $T$ invariant. Set $\cg=D(T)$ endowed with it graph topology. Then 
\begin{enumerate}
\item $T$ is of class $C^1(A)$ if and only if $T\in C^1(A,\cg,\cg^*)$.

\item $T$ is of class $C^{1,1}(A)$ if and only if $T\in C^{1,1}(A,\cg,\cg^*)$.
\end{enumerate}
\end{theorem}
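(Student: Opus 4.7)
The plan is to exploit the hypothesis that $e^{\rmi A\tau}$ leaves $D(T)$ invariant to promote it from a unitary group on $\ch$ to a $C_0$-group on the graph space $\cg = D(T)$. First I would apply the uniform boundedness principle: for each fixed $f \in \cg$ the orbit $\tau \mapsto e^{\rmi A\tau} f$ is continuous into $\ch$ and takes values in $\cg$, so a closed graph / Banach--Steinhaus argument on short time intervals gives that $e^{\rmi A\tau}$ is bounded on $\cg$ uniformly for $\tau$ in compact sets. By duality, the adjoint group extends to $\cg^*$, and together these let us interpret $e^{\rmi A\tau} T e^{-\rmi A\tau}$ unambiguously either as a bounded operator $\cg \to \cg^*$ or, on a suitable domain, as a bounded operator $\ch \to \ch$.

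For part (1), the implication $T \in C^1(A) \Rightarrow T \in C^1(A,\cg,\cg^*)$ is essentially definitional, since the commutator $[T,\rmi A]$ required by $C^1(A)$ extends by construction to a bounded map $\cg \to \cg^*$. For the converse, assume $[T,\rmi A]:\cg \to \cg^*$ extends boundedly. For $z \notin \sigma(T)$ the resolvent $R(z)=(T-z)^{-1}$ sends $\ch$ into $\cg$, so the identity
\[[R(z),A] \;=\; -R(z)\,[T,A]\,R(z)\]
makes sense in $\cb(\ch)$ and supplies the commutator half of the $C^1(A)$ criterion. The subtle half is the core condition on $\{f\in D(A): R(z)f, R(\bar{z})f\in D(A)\}$. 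For this one exploits the invariance hypothesis a second time: averaging $R(z)f$ against a smooth compactly supported function of $\tau$ in $e^{\rmi A\tau}$ produces approximants lying in $D(A)\cap\cg$ that converge to $f$ in the $A$-graph norm, showing $D(A)\cap D(T)$ is a core for $A$.

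For part (2), I would use the characterisation of $C^{1,1}$ by the Dini integral
\[\int_0^1 \bigl\|e^{\rmi A\tau} B \,e^{-\rmi A\tau} + e^{-\rmi A\tau} B\, e^{\rmi A\tau} - 2B\bigr\|\,\frac{\rmd\tau}{\tau^2}<\infty,\]
applied either to $B=R(z)$ in the operator norm on $\ch$, or to $B=T$ in the operator norm $\cg\to\cg^*$. Using the resolvent identity for $[R(z),A]$ a second time, the second difference of $R(z)$ can be expanded as a sum of terms of the form $e^{\pm \rmi A\tau}R(z)e^{\mp \rmi A\tau}$ multiplied by the second difference of $T$; the uniform bounds on $e^{\rmi A\tau}$ in $\cb(\cg)$ and $\cb(\cg^*)$ from the first step then let one transfer the Dini integrability between $B=R(z)$ and $B=T$ in both directions.

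The main obstacle is neither commutator algebra nor the $C^{1,1}$ bookkeeping, but precisely the core condition in the reverse direction of (1): verifying that $D(A)\cap D(T)$ is a core for $A$ without any a priori compatibility of the two domains. The invariance of $D(T)$ under $e^{\rmi A\tau}$ is exactly the structural input that allows the regularisation-by-averaging trick to produce approximants that are simultaneously in $D(A)$ and in $D(T)$, so the hypothesis of the theorem cannot be weakened without a substitute for this step.
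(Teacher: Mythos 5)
This theorem is not proved in the paper at all: it is imported verbatim as Theorem 6.3.4 of \cite{ABG} and used as a black box, so there is no internal proof to compare your attempt against. Judged on its own, your sketch reproduces the standard route of \cite{ABG}: restrict the unitary group to $\cg=D(T)$ and upgrade it to a $C_0$-group there by a closed-graph/Banach--Steinhaus argument, pass between regularity of $T$ in $\cb(\cg,\cg^*)$ and regularity of $R(z)$ in $\cb(\ch)$ via $[R(z),A]=-R(z)[T,A]R(z)$, and transfer the Dini integral for the $C^{1,1}$ case. That is the right skeleton.

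Two steps are glossed over. First, the domain condition in the definition of $C^1(A)$ for unbounded $T$ (stated earlier in this paper) is that $\{f\in D(A):R(z)f\in D(A)\ \text{and}\ R(\bar z)f\in D(A)\}$ be a core for $A$; this is not the same as $D(A)\cap D(T)$ being a core for $A$, which is what your averaging argument delivers. Your regularisation $f_\varepsilon=\int\theta_\varepsilon(\tau)e^{\rmi A\tau}f\,\mathrm{d}\tau$ lands in $D(A)$, but to conclude $R(z)f_\varepsilon\in D(A)$ you must use the invariance again, writing $R(z)f_\varepsilon=\int\theta_\varepsilon(\tau)e^{\rmi A\tau}\bigl(e^{-\rmi A\tau}R(z)e^{\rmi A\tau}\bigr)f\,\mathrm{d}\tau$ and justifying the differentiation under the integral, which leans on the boundedness of $[T,A]:\cg\to\cg^*$ already assumed; this deserves to be spelled out rather than asserted. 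Second, in part (2) the second difference of $R(z)$ is not simply $R(z)$ times the second difference of $T$ times $R(z)$: the resolvent identity leaves quadratic remainders of the form $(R_\tau(z)-R(z))(T_\tau-T)R(z)$, which are only $O(\tau^2)$ (hence integrable against $\mathrm{d}\tau/\tau^2$) once the $C^1$ bound from part (1) is in hand, so the order of the two parts matters. Neither point is fatal --- both are resolved exactly as in \cite{ABG} --- but they are where the actual work sits, and a referee of a self-contained proof would ask for them.
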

%Remark that, if $T:\ch\rightarrow\ch$ is not bounded, since $T:\cg\rightarrow\cg^*$ is bounded, in general, it is easier to prove that $T\in C^1(A,\cg,\cg^*)$ than $T\in C^1(A)$.

If $\cg$ is the form domain of $H$, we have the following:
\begin{proposition}[see p. 258 of \cite{ABG}]
Let $A$ and $T$ be self-adjoint operators in a Hilbert space $\ch$. Assume that the unitary group $\{\exp(\rmi A\tau)\}_{\tau\in\R}$ leaves the form domain $\cg$ of $T$ invariant. Then $T$ is of class $C^k(A)$ if and only if $T\in C^k(A,\cg,\cg^*)$, for all $k\in\N$.
\end{proposition}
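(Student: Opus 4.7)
The plan is to phrase both conditions in terms of the resolvent and then compare. By the definitions recalled in Section 2, $T \in C^k(A)$ is equivalent to $R(z) := (T-z)^{-1} \in C^k(A, \ch, \ch)$ for any $z \in \C \setminus \sigma(T)$, while $T \in C^k(A, \cg, \cg^*)$ corresponds to the bounded operator $T \in \cb(\cg, \cg^*)$ (the form extension of the self-adjoint $T$) being of class $C^k(A)$; equivalently, $R(z) \in C^k(A, \cg^*, \cg)$, since $T - z : \cg \to \cg^*$ is a topological isomorphism with inverse $R(z)$. First I would derive from the invariance hypothesis, via the closed graph theorem and standard strong-continuity arguments, that $\{e^{\i A \tau}\}_{\tau \in \R}$ restricts to a $C_0$-group on $\cg$; dualizing, it extends to a $C_0$-group on $\cg^*$ that is consistent with both the $\ch$-action and the continuous embeddings $\cg \hookrightarrow \ch \hookrightarrow \cg^*$.

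The direction $C^k(A, \cg, \cg^*) \Rightarrow C^k(A)$ is then almost immediate: $R(z) \in C^k(A, \cg^*, \cg)$ implies, for every $f \in \ch \subset \cg^*$, that the orbit $\tau \mapsto e^{\i A \tau} R(z) e^{-\i A \tau} f$ is $C^k$ into $\cg$, and composition with the embedding $\cg \hookrightarrow \ch$ yields $C^k$ regularity into $\ch$.

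For the converse I would proceed by induction on $k$, the base case $k=1$ being the crux. The formal identity $[A, R(z)] = -R(z)[A, T]R(z)$, read as a sesquilinear form on $\cd(A) \cap \cg$, is rigorously meaningful because the invariance of $\cg$ under $\{e^{\i A \tau}\}$, combined with a Nelson-type argument, makes $\cd(A) \cap \cg$ a core for $A$ that is also dense in $\cg$. Given $R(z) \in C^1(A, \ch, \ch)$, the left-hand side has a bounded extension to $\ch \times \ch$; combining this with the factorizations $R(z) : \ch \to \cg$ and $R(z) : \cg^* \to \ch$ (both consequences of $\cg$ being the form domain), one reverses the identity to produce a bounded extension of $[A, T]$ to $\cg \times \cg$, i.e.\ a bounded operator $\cg \to \cg^*$. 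Feeding this back into the commutator formula gives $[A, R(z)]$ bounded $\cg^* \to \cg$, and hence $R(z) \in C^1(A, \cg^*, \cg)$. The inductive step uses the characterization $T \in C^k(A) \Leftrightarrow [T, A] \in C^{k-1}(A)$ in both topologies together with the resolvent identity. The main obstacle lies precisely in this $k=1$ step: making the form-level manipulation of $[A, T]$ rigorous requires the invariance of $\cg$ (not merely of $\cd(T)$, as in Theorem \ref{th:634}), because only then does one have a common dense subspace on which the form identity is defined and from which it can be extended by density while remaining in the form-domain framework.
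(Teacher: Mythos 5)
The paper does not actually prove this proposition; it is quoted directly from p.~258 of \cite{ABG}, so there is no internal proof to compare against and your attempt must be judged on its own. Your overall scheme---recast both conditions via the resolvent, identify $T\in C^k(A,\cg,\cg^*)$ with $R(z)\in C^k(A,\cg^*,\cg)$ through the topological isomorphism $T-z:\cg\to\cg^*$, and treat the two implications separately---is the right one, and the implication $C^k(A,\cg,\cg^*)\Rightarrow C^k(A)$ is argued correctly (restrict the orbit map to $f\in\ch\subset\cg^*$, then compose with $\cg\hookrightarrow\ch$; the consistency of the three group actions that you invoke does hold).

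The converse, however, has a genuine gap exactly where you ``reverse the identity to produce a bounded extension of $[A,T]$ to $\cg\times\cg$.'' Unpacking that reversal, for $f,g\in\cd(T)$ it gives
\[
(f,[A,T]g)=-\bigl((T-\bar z)f,\,[A,R(z)]\,(T-z)g\bigr),
\]
and the only estimate you can extract from $R(z)\in C^1(A,\ch,\ch)$ is
\[
\bigl|(f,[A,T]g)\bigr|\le\|[A,R(z)]\|_{\cb(\ch)}\,\|(T-\bar z)f\|\,\|(T-z)g\|,
\]
which controls the form by the \emph{graph} norm of $\cd(T)$, not by the strictly weaker norm of $\cg=\cd(|T|^{1/2})$. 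What this shows is $[A,T]\in\cb(\cd(T),\cd(T)^*)$, which is strictly weaker than the needed $[A,T]\in\cb(\cg,\cg^*)$. Tellingly, the invariance of $\cg$ under $e^{\rmi\tau A}$ plays no quantitative role in this estimate---it only serves to make the target statement meaningful---whereas that invariance is precisely what must be exploited to pass from a $\cd(T)$-bound to a $\cg$-bound. Closing this step is the actual content of the \cite{ABG} result: one standard route is to replace $A$ by the bounded regularizations $A_\varepsilon=(\rmi\varepsilon)^{-1}(e^{\rmi\varepsilon A}-1)$, for which $[A_\varepsilon,T]\in\cb(\cg,\cg^*)$ holds automatically by the invariance of $\cg$, derive from $R(z)\in C^1(A,\ch,\ch)$ a bound on $\|[A_\varepsilon,T]\|_{\cb(\cg,\cg^*)}$ uniform in $\varepsilon$, and then pass to the limit. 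Once the $k=1$ case is settled, the inductive step you sketch for $k\ge2$ is fine, but as written the $k=1$ argument does not prove what it needs to.
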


As previously, since $T:\cg\rightarrow\cg^*$ is always bounded, it is, in general, easier to prove that $T\in C^k(A,\cg,\cg^*)$ than $T\in C^k(A)$.

\subsection{The Hardy inequality}

To have concrete conditions on the potential, we will use the Hardy inequality. For this reason we recall it:
\begin{proposition}
Assume that $n\geq3$. Let $f\in \ch^1(\R^n)$. We have
\[\frac{(n-2)^2}{4}\int_{\R^n}\frac{1}{|x|^2}|f(x)|^2dx\leq \int_{\R^n}|\nabla f(x)|^2dx.\]
\end{proposition}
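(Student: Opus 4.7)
The plan is to reduce to the standard proof based on expanding a square and integrating by parts. First I would argue by density that it is enough to prove the inequality for $f\in C^\infty_\rmc(\R^n\setminus\{0\})$: both sides are continuous in the $\ch^1$-norm (the left side by Fatou, since we can approximate in $\ch^1$ by smooth compactly supported functions avoiding the origin via a standard cutoff argument that is valid precisely because $n\geq 3$), so the estimate extends to all of $\ch^1(\R^n)$ once it is established on the dense subspace.

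Next, for such $f$ and any real parameter $\lambda$, I would start from the trivial inequality
\[
0\leq \int_{\R^n}\Bigl|\nabla f(x)+\lambda\,\frac{x}{|x|^2}f(x)\Bigr|^2\,\d x.
\]
Expanding the square gives
\[
\int_{\R^n}|\nabla f|^2\,\d x +2\lambda\,\mathrm{Re}\int_{\R^n}\frac{x}{|x|^2}\cdot\overline{f}\,\nabla f\,\d x+\lambda^2\int_{\R^n}\frac{|f|^2}{|x|^2}\,\d x\geq 0.
\]
The cross term rewrites as $\int \frac{x}{|x|^2}\cdot\nabla(|f|^2)\,\d x$, which after an integration by parts (no boundary contribution because $f$ is compactly supported away from the origin) equals $-\int |f|^2\,\Div(x/|x|^2)\,\d x$. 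A direct computation yields $\Div(x/|x|^2)=(n-2)/|x|^2$, so the inequality becomes
\[
\int_{\R^n}|\nabla f|^2\,\d x+\bigl(\lambda^2-(n-2)\lambda\bigr)\int_{\R^n}\frac{|f|^2}{|x|^2}\,\d x\geq 0.
\]

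Finally I would optimize in $\lambda$: the quadratic $\lambda^2-(n-2)\lambda$ attains its minimum at $\lambda=(n-2)/2$, giving the value $-(n-2)^2/4$. This immediately yields
\[
\int_{\R^n}|\nabla f|^2\,\d x\geq \frac{(n-2)^2}{4}\int_{\R^n}\frac{|f|^2}{|x|^2}\,\d x,
\]
and the density argument from the first step concludes the proof. The only mildly delicate step is the justification of the density reduction, which is where the hypothesis $n\geq 3$ is used in an essential way, since for $n=2$ the function $x\mapsto 1/|x|^2$ fails to be controlled by the gradient norm.
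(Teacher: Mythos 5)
Your argument is correct and is the classical proof of Hardy's inequality (expanding $\int|\nabla f+\lambda\,x|x|^{-2}f|^2\geq 0$, integrating the cross term by parts via $\Div(x/|x|^2)=(n-2)/|x|^2$, and optimizing $\lambda=(n-2)/2$); the paper simply recalls this proposition as a known fact and gives no proof, so there is nothing to compare against. One small inaccuracy in your closing remark: the hypothesis $n\geq3$ is not really "used in an essential way" in the density reduction (removing a point from the support is possible in $\ch^1$ for $n=2$ as well, via a logarithmic cutoff); where $n\geq3$ genuinely matters is that the resulting constant $(n-2)^2/4$ is nonzero and that $|x|^{-2}$ is locally integrable, and indeed the simple radial cutoff you invoke, with $\int|\nabla\chi_\varepsilon|^2\sim\varepsilon^{n-2}$, does require $n\geq3$. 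Alternatively, you could avoid the density of $C^\infty_\rmc(\R^n\setminus\{0\})$ altogether by working with $f\in C^\infty_\rmc(\R^n)$ and checking that the boundary term on the sphere $\{|x|=\varepsilon\}$ is $O(\varepsilon^{n-2})\to0$.
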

In particular, this inequality implies that if $B(q)$ is a multiplication operator such that $|q|^2B(q)$ is bounded, if $n\geq3$, then, there is $C>0$ such that
\[\left|(f,B(q)f)\right|\leq C\|\nabla f\|^2.\]

 \section{The method of the weakly conjugate operator}\label{s:theoric}
 
 In this section, we will recall a version of the Mourre theory in order to obtain a limiting absorption principle near thresholds, called the method of the weakly conjugate operator. This Mourre theory was developed by A. Boutet de Monvel and M. Mantoiu in \cite{BM}. An improvement of this theory was developped by S. Richard in \cite{Ri} fo the self-adjoint case. Here, we recall a version of this theory present in \cite{BoGo} adapted to the non-self-adjoint case.
 
 Let $H^\pm$ two closed operators with a common domain $\cd$. We suppose that $(H^+)^*=H^-$. Since $H^\pm$ are densely defined, has common domain and are adjoint of the other, $\Re(H^\pm)$ and $\Im(H^\pm)$ are closable and symmetric on $\cd$. Even if they are not self-adjoint, we can remark that $\cd$ is a core for them. Therefore $\cg$ is a core for them too. We keep the same notation for their closure.
 
 We assume that $H^+$ is dissipative i.e $\Im(H^+)\geq0$. This implies that $\Im(H^-)\leq0$ and, by the numerical range theorem, we can say that $\sigma(H^\pm)$ is include in the half-plane $\{z\in\C,\pm\Im(z)\geq0\}$. Let $S$ a non negative, injective, self-adjoint operator with form domain $\cg=\cd(S^{1/2})\supset\cd$. Let $\cs$ the completion of $\cg$ under the norm $\|f\|_\cs=\left(\|S^{1/2}f\|\right)^{1/2}$. 
 
 We get the following inclusions with continuous and dense embeddings
 \[\cd\subset\cg\subset\cs\subset\cs^*\subset\cg^*\subset\cd^*.\]
 We will need an external operator $A$, the conjugate operator. Assume $A$ is self-adjoint in $\ch$ and $S\in C^1(A)$. Let $W_t=e^{\rmi tA}$ be the $C_0$-group associated to A in $\ch$. We suppose that $W_t$ stabilizes $\cg$ and $\cs$. This implies, by duality, that $W_t$ stabilizes $\cg^*$ and $\cs^*$. Remark that if $[S,\rmi A]:\cs\rightarrow\cs^*$ is bounded, then the invariance of $\cs$ under the $C^0$-group $W_t$ is a consequence of the invariance of $\cg$ (see \cite[Remark B.1]{BoGo}).
 
 \begin{theorem}[Theorem B.1 of \cite{BoGo}]\label{th: Theorem B.1}
 Let $H^\pm$ and $A$ as above. Suppose that $H^\pm\in C^2(A,\cg,\cg^*)$ and that there is $c>0$ such that
 \begin{equation}\label{eq: B.7}
 \left|(H^\mp f,Ag)-(Af,H^\pm g)\right|\leq c\|f\|\cdot\|(H^\pm\pm\rmi)g\|,
 \end{equation}
 for all $f,g\in\cd\cap\cd(A)$.
 Assume that $\pm\Im(H^\pm)\geq0$ and that there is $c_1\geq0$ such that
 \begin{equation}\label{eq: B.8}
 [\Re(H^\pm),\rmi A]-c_1\Re(H^\pm)\geq S>0,
 \end{equation}
 \[\pm c_1[\Im(H^\pm),iA]\geq0,\]
 in sense of forms on $\cg$. Suppose also there exists $C>0$ such that
 \begin{equation}\label{eq: B.10}
 \left|(f,[[H^\pm,A],A]f)\right|\leq C\|S^{1/2}f\|^2
 \end{equation}
 for all $f\in\cg$.
 Then, there are $c'$ and $\mu_0>0$ such that
 \begin{equation}\label{eq: B.11}
 \left|(f,(H^\pm-\lambda\pm\rmi \mu)^{-1}f)\right|\leq c'\left(\|S^{-1/2}f\|^2+\|S^{-1/2}Af\|^2\right)
 \end{equation}
 for all $0<\mu<\mu_0$ and $\lambda\geq0$ if $c_1>0$, and $\lambda\in\R$ if $c_1=0$.
\end{theorem}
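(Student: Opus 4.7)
The plan is to adapt the classical Mourre positive-commutator argument to the dissipative non-self-adjoint setting, in the spirit of the original Boutet de Monvel--Mantoiu framework and the refinements due to Richard.

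First I would regularize $A$ to a bounded approximation $A_\epsilon = A(1+\epsilon A^2)^{-1}$, which converges strongly to $A$ on $\cd(A)$ and for which the commutators $[H^\pm,\rmi A_\epsilon]$ and $[[H^\pm,\rmi A_\epsilon],\rmi A_\epsilon]$ are bounded as forms on $\cg$, uniformly in $\epsilon$ by (B.10). The assumption $H^\pm\in C^2(A,\cg,\cg^*)$ and the stability of $\cg$ (and hence $\cg^*$) under the group $W_t$ justify all algebraic manipulations at this regularized level. Hypothesis (B.7) provides the substitute for self-adjointness: it lets one interchange the order in which $A$ and $H^\pm$ appear on either side of a resolvent, up to a bounded error controlled by $\|(H^\pm\pm\rmi)\cdot\|$.

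Next, for $z=\lambda-\rmi\mu$ with $\mu>0$, set $G^\pm=(H^\pm-\lambda\pm\rmi\mu)^{-1}$ and note $G^-=(G^+)^*$. The standard sandwich computation gives, schematically,
\[
\bigl(G^+ f,\,[H^+,\rmi A_\epsilon]\,G^+ f\bigr) \;=\; 2\,\mathrm{Im}\bigl(G^+ f,\,A_\epsilon f\bigr)\;-\;2\mu\bigl(G^+ f,\,A_\epsilon G^+ f\bigr)\;+\;E_\epsilon,
\]
where $E_\epsilon$ is an error involving $A_\epsilon(H^+-H^-)=2\rmi A_\epsilon\Im(H^+)$ that has a controlled limit as $\epsilon\to 0$. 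Applying the positive-commutator bound (B.8) on the left, combined with the sign assumption $c_1[\Im(H^+),\rmi A]\geq 0$ and the dissipativity $\Im(H^+)\geq 0$, produces an a priori inequality of the form
\[
\|S^{1/2} G^+ f\|^2 + c_1\bigl(G^+ f,\,\Re(H^+)\,G^+ f\bigr) \;\leq\; C\bigl(\|S^{-1/2}f\|+\|S^{-1/2}Af\|\bigr)\,\|S^{1/2}G^+ f\| + \mathrm{l.o.t.}
\]
Absorbing the quadratic factor into the left-hand side gives a uniform-in-$\mu,\epsilon$ bound on $\|S^{1/2}G^+ f\|$; passing to $\epsilon\to 0$ via (B.10) and then applying Cauchy--Schwarz in $(f,G^+ f) = (S^{-1/2}f,\,S^{1/2}G^+ f)$ yields (B.11). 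The $H^-$ case is treated symmetrically.

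The main obstacle I expect is the non-self-adjoint bookkeeping: because $H^+\neq H^-$, one cannot freely take adjoints and must always pair $G^+$ with $G^-=(G^+)^*$ via (B.7) whenever a resolvent is moved across $A$; this is where the full strength of hypothesis (B.7) is used. A second, more subtle, difficulty is the role of $c_1$: when $c_1>0$ the term $c_1(G^+ f,\Re(H^+)G^+ f)$ only carries a favourable sign when $\Re(H^+)\geq\lambda$ on the relevant spectral window, which forces the restriction $\lambda\geq 0$ in the conclusion; the case $c_1=0$ is the pure Putnam--Kato-type estimate and extends to all $\lambda\in\R$.
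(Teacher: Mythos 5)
The paper does not prove this theorem; it is imported verbatim from \cite{BoGo} and used as a black box, so there is no in-text proof to compare your argument against. What follows therefore assesses the sketch on its own terms.

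Your overall architecture (regularize $A$, positive commutator, Cauchy--Schwarz on $(f,G^+f)=(S^{-1/2}f,S^{1/2}G^+f)$) is the right one, and the algebraic identity you write down is correct. But the sketch glosses over the one place where the argument is actually delicate. After moving $H^+$ across the resolvent on both sides, what is left is
\[
(G^+f,[H^+,\rmi A_\epsilon]G^+f)\;=\;2\,\Im(G^+f,A_\epsilon f)\;-\;2\mu(G^+f,A_\epsilon G^+f)\;+\;2(\Im(H^+)G^+f,A_\epsilon G^+f),
\]
and neither of the last two terms has a sign: $A_\epsilon$ is bounded but indefinite, so $(G^+f,A_\epsilon G^+f)$ and $(\Im(H^+)G^+f,A_\epsilon G^+f)$ cannot be discarded. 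Calling them ``an error with a controlled limit as $\epsilon\to0$'' misses the real difficulty, which is uniformity in $\mu$: the conclusion \eqref{eq: B.11} must hold for all $0<\mu<\mu_0$ while $\|G^+f\|$ blows up like $\mu^{-1}$. Note that the natural pairing that \emph{does} kill these terms is the cross term: since $G^-=(G^+)^*$ one finds, with nothing left over,
\[
(G^-f,[H^+,\rmi A_\epsilon]G^+f)\;=\;\rmi(f,A_\epsilon G^+f)\;-\;\rmi(G^-f,A_\epsilon f),
\]
but this is no longer a quadratic form in a single vector, so the positivity hypothesis \eqref{eq: B.8} cannot be applied to it directly. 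Reconciling these two facts --- the symmetric sandwich gives positivity but leaves uncontrolled terms, the asymmetric sandwich is clean but not a form --- is precisely the content of the proof in \cite{BoGo}, and your sketch does not supply the missing mechanism (a modified resolvent, a differential inequality in a regularization parameter, or an interpolation between $G^+$ and $G^-$).

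Two smaller points. First, the r\^ole you ascribe to \eqref{eq: B.7} (``interchange the order of $A$ and $H^\pm$ across a resolvent'') is too vague to be checkable; that hypothesis is what makes $[H^\pm,A]$ a well-defined $\cd$-bounded form and legitimizes the limit $A_\epsilon\to A$, and a proof would need to invoke it at a specific step. Second, the explanation of the restriction $\lambda\geq0$ when $c_1>0$ via ``$\Re(H^+)\geq\lambda$ on the relevant spectral window'' does not make sense for a non-self-adjoint $H^+$ (there is no spectral measure); the correct mechanism is the identity $c_1(G^+f,\Re(H^+)G^+f)=c_1\Re(G^+f,f)+c_1\lambda\|G^+f\|^2-c_1\mu\,\Im(\ldots)$ coming from $\Re(H^+)G^+=\tfrac12(1+\mathrm{const}\cdot G^+)+\ldots$, where the sign of the $c_1\lambda\|G^+f\|^2$ contribution is what forces $\lambda\geq0$.
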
 
 Remark that, if we want to have \eqref{eq: B.8} with $c_1>0$, it seems necessary that $[H,\rmi A]$ reproduce $H$. This is the case with the generator of dilations for which we have $[\Delta,\rmi A_D]=2\Delta$. But with conjugate operators we want to use, the commutator with the Laplacian does not reproduce the Laplacian $\Delta$. For this reason, we will use Theorem \ref{th: Theorem B.1} only in the case $c_1=0$.

\section{The generator of dilations as conjugate operator}\label{s:A_D}
In this section, we will see what conditions are sufficient to apply Theorem \ref{th: Theorem B.1} with the generator of dilations as conjugate operator. For this, we will recall a result from \cite{BoGo} in dimension higher or equal to $3$ which illustrate the method of the weakly conjugate operator and we will give a variation of this result.  We will also recall a result from B. Simon which show that dimensions 1 and 2 are quite particular.

Using the Hardy inequality and the generator of dilations as conjugate operator, one can show the following result
 
 \begin{theorem}[Theorem C.1, \cite{BoGo}]\label{th:BoGo}
 Let $n\geq3$. 
 Assume that $V_1,V_2\in L^1_{loc}(\R^n,\R)$ satisfy:
 \begin{enumerate}
 \item $V_k$ are $\Delta$-bounded with bound less than one and $V_2\geq0$;
 
 \smallskip
 
 \item $\nabla V_k, q\cdot\nabla V_k$ are $\Delta$-bounded and $|q|^2(q\cdot \nabla)^2V_k$ are bounded;
 
 \smallskip
 
 \item There is $c_1\in[0,2)$ and $C\in[0,\frac{(2-c_1)(n-2)^2}{4})$ such that
 \[x\cdot\nabla V_1(x)+c_1 V_1(x)\leq \frac{C}{|x|^2}\]
 and 
 \[-c_1 x\cdot\nabla V_2(x)\geq 0\]
 for all $x\in\R^n$.
 \end{enumerate}
 Then $H$ has no eigenvalue in $[0,+\infty)$ and 
 \begin{equation}\label{eq:C.1}
 \sup_{\lambda\in[0,+\infty),\mu>0}\||q|^{-1}(H-\lambda+\rmi \mu)^{-1}|q|^{-1}\|<\infty.
 \end{equation}
 If $c_1=0$, $H$ has no eigenvalue in $\R$ and \eqref{eq:C.1} holds true for $\lambda\in\R$.
 \end{theorem}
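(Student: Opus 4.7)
The plan is to apply Theorem \ref{th: Theorem B.1} with the generator of dilations $A_D = \tfrac12(p\cdot q + q\cdot p)$ as conjugate operator, common domain $\cd = \ch^2$, form domain $\cg = \ch^1$, and an auxiliary operator $S$ adapted to the Hardy inequality. By condition (1) and Kato--Rellich, $H^\pm = \Delta + V_1 \pm \rmi V_2$ are closed on $\ch^2$ with $(H^+)^* = H^-$, and $H^+$ is dissipative since $V_2 \geq 0$. The dilation group $\{e^{\rmi t A_D}\}$ leaves both $\ch^2$ and $\ch^1$ invariant.

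For the regularity $H^\pm \in C^2(A_D, \cg, \cg^*)$ and condition \eqref{eq: B.7}: the identities $[\Delta, \rmi A_D] = 2\Delta$ and $[[\Delta, \rmi A_D], \rmi A_D] = 4\Delta$ handle the Laplace part. For the potential, $[V_k, \rmi A_D] = -q\cdot\nabla V_k$ is form-bounded $\ch^1 \to \ch^{-1}$ because $q\cdot\nabla V_k$ is $\Delta$-bounded by (2), and $[[V_k, \rmi A_D], \rmi A_D] = (q\cdot\nabla)^2 V_k$ is dominated pointwise by $|q|^{-2}$ by (2), hence form-bounded $\ch^1 \to \ch^{-1}$ via Hardy.

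The core is the Mourre estimate \eqref{eq: B.8}. A direct computation gives
\[[\Re H^\pm, \rmi A_D] - c_1 \Re H^\pm = (2-c_1)\Delta - (q\cdot\nabla V_1 + c_1 V_1).\]
By hypothesis (3) the potential contribution is bounded above by $C|q|^{-2}$. Splitting $(2-c_1)\Delta = a\Delta + \bigl((2-c_1)-a\bigr)\Delta$ and applying Hardy to the second piece yields
\[[\Re H^\pm, \rmi A_D] - c_1 \Re H^\pm \geq a\Delta + \Bigl[\bigl((2-c_1)-a\bigr)\tfrac{(n-2)^2}{4} - C\Bigr]|q|^{-2}.\]
Since $C < (2-c_1)(n-2)^2/4$, one can choose $a > 0$ so that both $a$ and the bracket $b$ are strictly positive. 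I then take $S$ to be the self-adjoint operator associated with the closed quadratic form $a\|\nabla f\|^2 + b\||q|^{-1}f\|^2$; it is strictly positive, injective, has form domain $\ch^1$, and is invariant (in form) under the dilation group. The second half of \eqref{eq: B.8}, $\pm c_1[\Im H^\pm, \rmi A_D] = -c_1 q\cdot\nabla V_2 \geq 0$, is exactly the second inequality in (3). The double-commutator \eqref{eq: B.10} splits as $4\Delta + (q\cdot\nabla)^2(V_1\pm \rmi V_2)$, controlled respectively by the $a\Delta$ and $b|q|^{-2}$ pieces of $S$ via Hardy and (2).

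Theorem \ref{th: Theorem B.1} then produces \eqref{eq: B.11} on the stated spectral range (all of $\R$ when $c_1=0$, only $[0,+\infty)$ when $c_1>0$, where the generator of dilations reproduces $\Delta$). I expect the most delicate step to be translating that abstract sesquilinear bound into the concrete weighted estimate \eqref{eq:C.1}: the term $\|S^{-1/2}|q|^{-1}\phi\|$ is dominated by $\|\phi\|$ directly from $S \geq b|q|^{-2}$, but $\|S^{-1/2}A_D|q|^{-1}\phi\|$ requires the commutation identity $A_D|q|^{-1} = |q|^{-1}(q\cdot p) + \rmi(1-\tfrac{n}{2})|q|^{-1}$ together with the operator-form Hardy bound $|q|^{-1}\Delta^{-1}|q|^{-1} \leq \tfrac{4}{(n-2)^2}$, plus a Fourier-side manipulation to absorb the factor $q\cdot p$ using the $a\Delta$ part of $S$. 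Once \eqref{eq:C.1} is established, the absence of eigenvalues follows by the standard argument: a real eigenfunction $\phi$ of $H$ in the allowed range would give $R(\lambda+\rmi\mu)\phi = \rmi\phi/\mu$, and pairing against a sufficiently decaying test element would contradict the uniformity in $\mu$, after using the $L^2$-decay of eigenfunctions provided by the $\Delta$-compactness assumption on the $V_k$.
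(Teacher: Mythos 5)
The paper does not actually prove this theorem---it quotes it from \cite{BoGo} and only describes its strategy in passing (apply Theorem~\ref{th: Theorem B.1} with $A_D$, use Hardy to get a positive injective $S$ comparable to $\Delta$ from hypothesis (3), and control the double commutator via the boundedness of $|q|^2(q\cdot\nabla)^2V_k$ plus Hardy)---and your proposal follows exactly that strategy and is correct, your $S=a\Delta+b|q|^{-2}$ being Hardy-equivalent to the $c\Delta$ the paper attributes to \cite{BoGo}. The one step I would rewrite is the last one: instead of $A_D|q|^{-1}=|q|^{-1}(q\cdot p)+\rmi(1-\tfrac n2)|q|^{-1}$, which leaves the unbounded factor $q\cdot p$ acting directly on $\phi$ and forces the extra commutation you allude to, use $A_D=p\cdot q+\tfrac{\rmi n}{2}$ so that $S^{-1/2}A_D|q|^{-1}=\sum_j\bigl(S^{-1/2}p_j\bigr)\bigl(q_j|q|^{-1}\bigr)+\tfrac{\rmi n}{2}S^{-1/2}|q|^{-1}$ is manifestly a sum of products of bounded operators.
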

 
 Using that $A_D$ can be writen as $A_D=q\cdot p-\frac{n\rmi}{2}=p\cdot q+\frac{n\rmi}{2}$, we can avoid the condition on the second order derivative of $V_1$ and $V_2$ to obtain the following
 \begin{theorem}\label{th:A_F 2}
Let $n\geq3$.
Let $V_1,V_2\in L^1_{loc}(\R^n,\R)$ and $H=\Delta+V_1+\rmi V_2$.
Suppose that
\begin{enumerate}[label=(\textbf{H.\arabic*}),ref=\textbf{(H.\arabic*)}]
\item $V_k$ are $\Delta$-bounded with bound less than $1$ and $V_2\geq0$;

\item $\nabla V_k$ and $q\nabla V_k$ are $\Delta$-bounded and 
\[|x\nabla V_k(x)|\leq \frac{C}{|x|^2}\quad \forall x\in\R^n\backslash\{0\};\]

\item There is $c_1\in[0,2)$ and $C\in[0,\frac{(2-c_1)(n-2)^2}{4})$ such that
 \[x\cdot\nabla V_1(x)+c_1 V_1(x)\leq \frac{C}{|x|^2}\]
  and 
 \[-c_1 x\cdot\nabla V_2(x)\geq 0\]
 for all $x\in\R^n$. 
 \end{enumerate}
 Then $H$ has no eigenvalue in $[0,+\infty)$ and 
 \begin{equation}\label{eq:C.2}
 \sup_{\lambda\in[0,+\infty),\mu>0}\||q|^{-1}(H-\lambda+\rmi \mu)^{-1}|q|^{-1}\|<\infty.
 \end{equation}
 If $c_1=0$, $H$ has no eigenvalue in $\R$ and \eqref{eq:C.2} holds true for $\lambda\in\R$.
\end{theorem}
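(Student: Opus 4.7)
The plan is to apply Theorem \ref{th: Theorem B.1} with $A=A_D$ the generator of dilations, form domain $\cg=\ch^1$, and an $S$ coming from the Mourre estimate below. The dilation group $\{e^{\rmi tA_D}\}$ stabilizes $\ch^1$ (and by duality $\ch^{-1}$); combined with the identities $[\Delta,\rmi A_D]=2\Delta$ and $[V_k,\rmi A_D]=-q\cdot\nabla V_k$ together with hypothesis (H.2) that $q\cdot\nabla V_k$ is $\Delta$-bounded, this gives the regularity $H^\pm\in C^2(A_D,\cg,\cg^*)$ and the commutator bound \eqref{eq: B.7}. The sign conditions $\pm\Im H^\pm=V_2\geq 0$ and $\pm c_1[\Im H^\pm,\rmi A_D]=-c_1(q\cdot\nabla V_2)\geq 0$ come from (H.1) and (H.3).

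For the Mourre estimate \eqref{eq: B.8} I would compute
\[
[\Re H^\pm,\rmi A_D]-c_1\Re H^\pm=(2-c_1)\Delta-(q\cdot\nabla V_1+c_1V_1),
\]
and use (H.3) together with Hardy to bound the right-hand side below by $\bigl[\tfrac{(2-c_1)(n-2)^2}{4}-C\bigr]|q|^{-2}>0$. A natural choice is therefore $S=\varepsilon\Delta+\delta|q|^{-2}$ with $\varepsilon,\delta>0$ small; the Hardy inequality ensures that the form domain of $S$ is exactly $\cg=\ch^1$.

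The key new step is the double-commutator bound \eqref{eq: B.10}. The Laplacian contribution $[[\Delta,A_D],A_D]=-4\Delta$ produces $-4\|\nabla f\|^2$, comparable to $\|S^{1/2}f\|^2$. For the potential, $[[V_k,A_D],A_D]=-(q\cdot\nabla)^2V_k$, and a literal estimate would require a bound on the second derivative. Instead, exploiting the two equivalent expressions $A_D=q\cdot p-\rmi n/2=p\cdot q+\rmi n/2$ when rearranging
\[
(f,[[V_k,A_D],A_D]f)=(f,[V_k,A_D]A_D f)-(A_Df,[V_k,A_D]f)
\]
so that the $\pm\rmi n/2$ contributions cancel, and then integrating by parts in $x\cdot\nabla$, I would obtain
\[
(f,[[V_k,A_D],A_D]f)=2\Re\!\int\!\bar f\,(x\cdot\nabla f)(x\cdot\nabla V_k)\,dx+n\!\int\!|f|^2(x\cdot\nabla V_k)\,dx,
\]
in which only first derivatives of $V_k$ appear. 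Using $|x\cdot\nabla V_k|\leq C/|x|^2$ from (H.2), Cauchy--Schwarz and Hardy then give
\[
\left|\int\!\bar f(x\cdot\nabla f)(x\cdot\nabla V_k)\,dx\right|\leq C\||q|^{-1}f\|\cdot\|\nabla f\|\leq C'\|\nabla f\|^2,
\]
and the second integral is controlled in the same way; hence \eqref{eq: B.10} holds.

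Once all the hypotheses have been verified, Theorem \ref{th: Theorem B.1} yields the LAP \eqref{eq:C.2}, and the absence of real eigenvalues of $H$ (when $c_1=0$) follows from the uniform bound in the standard way. The main obstacle is the double-commutator step above: because (H.2) gives no pointwise control on $(q\cdot\nabla)^2V_k$, one must reorganize $(f,[[V_k,A_D],A_D]f)$ so that the second derivative is never called upon. The two equivalent forms of $A_D$ are precisely the operator-theoretic device that makes this integration by parts go through cleanly, and they are what allows us to drop the second-derivative assumption present in Theorem \ref{th:BoGo}.
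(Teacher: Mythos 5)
Your proof is correct and follows essentially the same route as the paper: the conjugate operator is $A_D$, the form domain is $\ch^1$ (so only $\cg\to\cg^*$ boundedness of $[[V_k,A_D],A_D]$ is required), and the decisive step is to expand the double commutator so that only $q\cdot\nabla V_k$ appears and then close the estimate with Hardy --- which is exactly what the paper does, writing $[[V,\rmi A_D],\rmi A_D]=-\rmi(q\cdot\nabla V)(q\cdot p)+\rmi(p\cdot q)(q\cdot\nabla V)-n\,q\cdot\nabla V$ and bounding each piece by $\||q|^2(q\cdot\nabla V)\|_\infty\,\|\nabla f\|^2$; your choice $S=\varepsilon\Delta+\delta|q|^{-2}$ rather than the paper's $S=c\Delta$ is an immaterial variant since both have form domain $\ch^1$ and equivalent form norms by Hardy.
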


We make few remarks about Theorem \ref{th:A_F 2}:
\begin{enumerate}
\item We do not assume any conditions on the second order derivatives which can be usefull if $V$ is a multiplication by a function wich is not $C^2$ or if $V$ is an oscillating potential (see Theorem \ref{th:oscillant}). 

\smallskip

\item Remark that if $|q|^2q\cdot \nabla V_1$ is bounded with bound small enough, then Assumption (H.2) and (H.3) are satisfied with $c_1=0$.

\smallskip

\item Remark that if $V$ is a short range type potential, $q\nabla V_k$ is not necessary $\Delta$-bounded. For this reason,  Theorems \ref{th:BoGo} and \ref{th:A_F 2} do not apply to short range potentials.
\end{enumerate}

\begin{proof}[Theorem \ref{th:A_F 2}]
Since the proof is quite similar to the proof of  Theorem \ref{th:BoGo}, we will only explain what changes. In particular, for conditions on the first order commutator (Assumptions (H.3)), nothing changes. The idea is to prove, using Hardy inequality that the first order commutator is positive and that $S$ has the form $c\Delta$. In \cite{BoGo}, they use, in a second time, that $V$ is of class $C^2(A_D)$ and the Hardy inequality to show the second order commutator estimate.  Since we want to use $S=c\Delta$ with $c=2-c_1$, we can see that $\cg=\ch^1$ and $\|f\|_\cs=\|pf\|_{L^2}$. Remark that $e^{\rmi tA_D}\ch^1\subset\ch^1$. In particular, we do not need to suppose that $[[V,\rmi A_D],\rmi A_D]$ is $\Delta$-bounded to obtain the regularity $C^2(A_D)$ but only that this commutator is bounded on $\cg$ to $\cg^*$.

For $V=V_1,V_2$, we have
\[[V,\rmi A_D]=-q\cdot\nabla V.\]
Thus 
\begin{eqnarray}\label{eq:commut A_D}
[[V,\rmi A_D],\rmi A_D]&=&-[q\cdot\nabla V,\rmi A_D]\nonumber\\
&=&-\rmi q\cdot\nabla VA_D+\rmi A_Dq\cdot\nabla V\nonumber\\
&=&-\rmi (q\cdot\nabla V)(q\cdot p)+\rmi (p\cdot q)(q\cdot\nabla V)-nq\cdot\nabla V.
\end{eqnarray}
In particular, if we assume that $|q|q\cdot\nabla V_k$ is bounded, then $H\in C^2(A_D,\ch^1,\ch^{-1})$.

Let $f\in\cs$. For the first term of the right side of \eqref{eq:commut A_D}, we have:
\begin{eqnarray*}
\left|(f,(q\cdot\nabla V)(q\cdot p)f)\right|&=&\left|((q\cdot\nabla V)qf,pf)\right|\\
&\leq&\|(q\cdot\nabla V)qf\|\|pf\|\\
&\leq&\||q|^2(q\cdot\nabla V)\|_\infty\||q|^{-1}f\|\|pf\|\\
&\leq&\frac{2}{n-2)}\||q|^2(q\cdot\nabla V)\|_\infty\|pf\|^2.
\end{eqnarray*}
A similar proof can be made for the second term. For the last term, from Hardy inequality, we deduce that 
\[\left|(f,(q\cdot\nabla V)f)\right|\leq\frac{4}{(n-2)^2}\||q|^2(q\cdot\nabla V)\|_\infty\|pf\|^2.\]
This implies that the second order comutator is bounded from $\cs$ to $\cs^*$. For the rest of the proof, we follow the proof of Theorem C.1 of \cite{BoGo}.
\qed
\end{proof}
 
 Remark that in all cases, we assume that the dimension $n$ is higher than $3$ to use the Hardy inequality. The case of dimension $n=1$ or $n=2$ is quite particular. In fact, in dimension $n=1$ or $n=2$, for a large class of potential, we can prove the existence of a negative eigenvalue which is a contradiction with the result which said that if $c_1=0$, $H$ has no real eigenvalue.
 \begin{theorem}[Theorem 2.5,\cite{Si3}]
 Let $n=1$. Let $V$ obey $\int(1+x^2)|V(x)|dx<\infty$, $V$ not a.e zero. Then $H=\Delta+\lambda V$ has a negative eigenvalue for all $\lambda>0$ if and only if $\int V(x)dx\leq0$.
 \end{theorem}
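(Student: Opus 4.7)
The plan is to prove the two directions of the equivalence by complementary techniques: sufficiency ($\int V\leq 0 \Rightarrow H$ has a negative eigenvalue for every $\lambda>0$) by variational trial functions, and necessity by the Birman--Schwinger principle. As a preliminary, the integrability $V\in L^1(\R)$ (implied by the moment hypothesis) together with the one-dimensional Sobolev embedding $\ch^1(\R)\hookrightarrow L^\infty(\R)$ shows that $V$ is $\Delta$-form compact, so by Weyl's theorem $\sigma_\mathrm{ess}(H)=[0,+\infty)$; any negative spectrum then consists of isolated eigenvalues, and it suffices to exhibit $\psi\in\ch^1(\R)$ with $\int|\psi'|^2+\lambda\int V|\psi|^2<0$. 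When $\int V<0$, take $\psi_\epsilon(x):=\rme^{-\epsilon|x|}$: then $\|\psi_\epsilon'\|^2=\epsilon$ and $\int V\,\rme^{-2\epsilon|x|}\,dx\to\int V$ by dominated convergence, so the quadratic form equals $\epsilon+\lambda\int V+o(1)$, which is negative for every small enough $\epsilon$ and every fixed $\lambda>0$.

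The delicate case is $\int V=0$ with $V\not\equiv 0$, where I would first build a bounded zero-energy resonance. The primitive $A(x):=\int_{-\infty}^x V(y)\,dy$ vanishes at $\pm\infty$ and satisfies $|A(x)|\leq |x|^{-2}\int y^2|V(y)|\,dy$ for $|x|$ large (elementary tail estimate using $\int V=0$ together with the second-moment hypothesis), so $A\in L^1(\R)$; then $u(x):=-\int_{-\infty}^x A(y)\,dy$ is bounded with $u'=-A$, $u'(\pm\infty)=0$, and $-u''=V$. Testing against $\psi_{\epsilon,R}(x):=(1-\epsilon u(x))\,\eta(x/R)$, with $\eta\in C_\rmc^\infty(\R)$ equal to $1$ on $[-1,1]$, a direct computation gives $\|\psi_{\epsilon,R}'\|^2=\epsilon^2\int(u')^2\,dx+O(1/R)$ and, crucially using $\int V=0$ together with $\int x^2|V|\,dx<\infty$,
\[
\int V\,\psi_{\epsilon,R}^2\,dx \;=\; -2\epsilon\int Vu\,dx \;+\; O(\epsilon^2) \;+\; O(1/R^2).
\]
Integration by parts yields $\int Vu\,dx=\int(u')^2\,dx>0$ (boundary terms vanish by $u'(\pm\infty)=0$, and strict positivity comes from $-u''=V\not\equiv 0$). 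Hence the quadratic form reads $-2\lambda\epsilon\int(u')^2+O(\epsilon^2)+O(1/R)$, which becomes strictly negative after first shrinking $\epsilon$ and then enlarging $R$, producing a bound state for any fixed $\lambda>0$.

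For necessity, argue contrapositively: assuming $\int V>0$, I would exhibit $\lambda_0>0$ such that $H$ has no negative eigenvalue for $\lambda\in(0,\lambda_0)$, via Birman--Schwinger. With $G_k:=(\Delta+k^2)^{-1}$ of kernel $(2k)^{-1}\rme^{-k|x-y|}$, $P:=|V|^{1/2}$, $Q:=\mathrm{sgn}(V)|V|^{1/2}$, the compact operator $B_k:=PG_kQ$ realizes the correspondence $H\psi=-k^2\psi \Leftrightarrow \ker(I+\lambda B_k)\neq\{0\}$. Split $B_k=(2k)^{-1}|P\rangle\langle Q|+R_k$; the bound $|\rme^{-t}-1|\leq t$ gives $|R_k(x,y)|\leq \tfrac{1}{2}|V(x)|^{1/2}|x-y||V(y)|^{1/2}$, and the second moment yields $\sup_{k>0}\|R_k\|_\mathrm{HS}<\infty$. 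For $\lambda\sup_k\|R_k\|<1/2$, $I+\lambda R_k$ is invertible with uniformly bounded inverse, and the rank-one structure reduces $\ker(I+\lambda B_k)\neq\{0\}$ to the scalar equation $\langle Q,(I+\lambda R_k)^{-1}P\rangle=-2k/\lambda$. By a Neumann expansion the left side is within $O(\lambda)$ of $\langle Q,P\rangle=\int V>0$, while the right side is negative for all $k>0$, so for $\lambda$ small no solution exists and no negative eigenvalue arises. The main obstacle is the critical case $\int V=0$: producing the bounded primitive $u$ of $-V$ and bookkeeping the cutoff errors so that the linear-in-$\epsilon$ contribution $-2\lambda\epsilon\int(u')^2$ dominates is the technical crux; the Birman--Schwinger side is routine once the uniform Hilbert--Schmidt bound on $R_k$ is at hand.
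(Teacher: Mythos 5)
The paper does not prove this statement: it is quoted verbatim from Simon \cite{Si3} precisely as a citation showing that dimensions $1$ and $2$ behave differently, and no argument is given. Your proof therefore has to be judged against Simon's original rather than against anything in the present paper. Your strategy --- variational trial functions for sufficiency, Birman--Schwinger with a rank-one split of $G_k=(\Delta+k^2)^{-1}$ for necessity --- is essentially the classical one and is correct as written. The computations check out: for $\int V<0$ the exponential trial function gives $\epsilon+\lambda\int V+o(1)$; for $\int V=0$, $V\not\equiv 0$, the tail estimate $|A(x)|\leq |x|^{-2}\int y^2|V|$ (valid because $\int V=0$) makes $A\in L^1$, $u$ bounded, and the integration by parts $\int Vu=\int(u')^2>0$ is justified since $u'=-A\to 0$ and $u$ is bounded; the resulting form is $\epsilon\|u'\|^2(\epsilon-2\lambda)+O(1/R)$, negative after fixing $\epsilon<2\lambda$ and taking $R$ large. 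On the necessity side, the reduction of $\ker(I+\lambda B_k)\neq\{0\}$ to the scalar equation $\langle Q,(I+\lambda R_k)^{-1}P\rangle=-2k/\lambda$, the bound $|R_k(x,y)|\leq\tfrac12|x-y||V(x)|^{1/2}|V(y)|^{1/2}$, and the uniform Hilbert--Schmidt estimate via $|x-y|^2\leq 2(x^2+y^2)$ are all correct, so for small $\lambda$ the left side is near $\int V>0$ while the right side is negative.

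Two small points worth tightening. First, the claim that $\ch^1(\R)\hookrightarrow L^\infty$ \emph{shows} $V$ is $\Delta$-form compact is too quick --- that embedding is bounded, not compact; what is actually true (and standard) is that $|V|^{1/2}(\Delta+1)^{-1}|V|^{1/2}$ has kernel $\tfrac12|V(x)|^{1/2}\rme^{-|x-y|}|V(y)|^{1/2}$, which is Hilbert--Schmidt for $V\in L^1(\R)$, and this gives relative form-compactness and hence $\sigma_{\mathrm{ess}}(H)=[0,\infty)$. Second, where you state ``$O(1/R^2)$'' in the expansion of $\int V\psi_{\epsilon,R}^2$, the error coming from replacing $\int Vu\,\eta^2(\cdot/R)$ by $\int Vu$ is $O(\epsilon/R^2)$ rather than $O(1/R^2)$; this is harmless since it is smaller, but the bookkeeping should make clear that the dominant $R$-error is the $O(1/R)$ in the kinetic term, which is why one fixes $\epsilon$ before sending $R\to\infty$. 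As for the comparison with Simon: his argument runs the Birman--Schwinger decomposition (the same rank-one singularity $(2k)^{-1}|P\rangle\langle Q|$ plus a uniformly bounded remainder) through \emph{both} directions, extracting the small-coupling energy asymptotic $E(\lambda)\sim-(\lambda\int V/2)^2$ when $\int V<0$ and tracking the sign of the scalar equation otherwise. Your replacement of the sufficiency half by explicit trial functions --- in particular the zero-energy resonance $1-\epsilon u$ built from a bounded antiderivative of $-V$ in the borderline case $\int V=0$ --- is a genuine simplification: it avoids the eigenvalue perturbation analysis entirely and makes the mechanism by which a bound state appears completely transparent.
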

 Similarly in dimension 2, we have
 \begin{theorem}[Theorem 3.4, \cite{Si3}]
 Let $n=2$. Let $V$ obey $\int|V(x)|^{1+\delta}d^2x<\infty$ and $\int(1+|x|^\delta)|V(x)|d^2x<\infty$ for some $\delta>0$, $V$ not a.e zero. Then $H=\Delta+\lambda V$ has a negative eigenvalue for all small positive $\lambda$ if and only if $\int V(x)dx\leq0$.
 \end{theorem}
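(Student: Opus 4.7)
The plan is to reduce to a quadratic-form criterion and then handle the two implications separately. Under the integrability hypotheses $V\in L^{1+\delta}$ and $\int(1+|x|^\delta)|V|<\infty$, the potential $V$ is relatively $\Delta$-compact, so by Weyl's theorem $\sigma_{\mathrm{ess}}(H_\lambda)=[0,+\infty)$, and the min-max principle shows that $H_\lambda$ has a negative eigenvalue iff $\langle\psi,H_\lambda\psi\rangle<0$ for some $\psi\in\ch^1(\R^2)$. The essential two-dimensional tool is the \emph{logarithmic cut-off} $\chi_R(x)=1$ for $|x|\leq R$, $\chi_R(x)=(\log R^2-\log|x|)/\log R$ for $R<|x|<R^2$, and $\chi_R(x)=0$ for $|x|\geq R^2$; polar coordinates give $\|\nabla\chi_R\|^2=2\pi/\log R\to 0$ while $\chi_R\to 1$ pointwise.

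For the direction $\int V\leq 0\Rightarrow$ existence of a bound state for all small $\lambda>0$, I split into two cases. If $\int V<0$, taking $\psi=\chi_R$ gives $\int V\chi_R^2\to\int V<0$ by dominated convergence and $\|\nabla\chi_R\|^2\to 0$, so $\langle\chi_R,H_\lambda\chi_R\rangle<0$ for $R$ sufficiently large. If $\int V=0$ and $V\not\equiv 0$, the vanishing mean allows the Poisson equation $\Delta w=V$ (with the paper's convention $\Delta=-\sum\partial_i^2$) to be solved on $\R^2$ via the Newtonian potential
\[
w(x)=-\frac{1}{2\pi}\int_{\R^2}\log|x-y|\,V(y)\,\rmd y,
\]
the moment condition together with $\int V=0$ yielding enough cancellation at infinity so that $\nabla w\in L^2(\R^2)$, and integration by parts giving the key identity $\|\nabla w\|^2=\int Vw>0$. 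Using the refined trial function $\psi_R=(1-\lambda w)\chi_R$, a careful expansion in $\lambda$ and $R$ produces
\[
\langle\psi_R,H_\lambda\psi_R\rangle=-\lambda^2\|\nabla w\|^2+\frac{2\pi}{\log R}+O(\lambda^3)+o(1),
\]
where the $O(\lambda)$ potential contribution vanishes because of $\int V=0$ and the $O(\lambda^2)$ cross-term equals $-2\lambda^2\|\nabla w\|^2$ via the identity above; taking $R$ exponentially large in $\lambda^{-2}$ makes this strictly negative.

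For the converse $\int V>0\Rightarrow$ absence of bound states for small $\lambda$, I invoke the Birman-Schwinger principle: $H_\lambda$ has $-E$ ($E>0$) as an eigenvalue iff $M_E:=|V|^{1/2}(\Delta+E)^{-1}|V|^{1/2}\,\mathrm{sgn}(V)$ has $-1/\lambda$ in its spectrum. In dimension two, the free Green's function $(2\pi)^{-1}K_0(\sqrt{E}|x-y|)$ admits the expansion
\[
G_E(x-y)=-\frac{\log E}{4\pi}-\frac{\log|x-y|}{2\pi}+\frac{\log 2-\gamma}{2\pi}+o(1)\qquad(E\to 0^+),
\]
so $M_E=c_E P+L+r_E$ where $c_E=-\log E/(4\pi)\to+\infty$, $P=|u\rangle\langle v|$ with $u=|V|^{1/2}$ and $v=|V|^{1/2}\,\mathrm{sgn}(V)$ (so $\langle v,u\rangle=\int V>0$), $L$ is Hilbert-Schmidt thanks to $|V|^{1+\delta}\in L^1$ and the log-moment estimate, and $\|r_E\|\to 0$. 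The dominant eigenvalue of $M_E$ is then $\sim c_E\int V\to+\infty$, while the remaining eigenvalues stay bounded in $E$ by the Hilbert-Schmidt norm of $L$; hence for $\lambda$ small the large negative number $-1/\lambda$ cannot lie in $\sigma(M_E)$ for any $E>0$, so no bound state exists.

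The main obstacle is the case $\int V=0$: the rank-one leading part of the Birman-Schwinger operator is nilpotent and a direct spectral expansion would demand delicate $\sqrt{\log E}$-scale Jordan-block asymptotics. The variational route above circumvents this by using the Newtonian potential $w$ itself as a counter-term inside the trial function, converting the subtle spectral matching into the manifestly positive quantity $\|\nabla w\|^2$; the quantitative moment assumption in the hypothesis is used precisely to guarantee $\nabla w\in L^2$.
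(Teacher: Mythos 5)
This theorem is quoted from B.\ Simon's paper [Si3] and is not proved in the present article; it appears only as a remark explaining why dimensions $1$ and $2$ must be treated differently from $n\geq 3$ in the weakly-conjugate-operator framework. There is therefore no in-text proof to compare against, so I compare your reconstruction with Simon's original argument.

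Simon proves the two-dimensional result entirely inside the Birman--Schwinger framework: the logarithmic blow-up of the free Green's function as $E\to0^+$ produces a diverging rank-one piece, and one tracks the relevant eigenvalue of the Birman--Schwinger kernel as a function of $E$. The borderline case $\int V=0$ is exactly where the rank-one piece becomes nilpotent on the relevant subspace and a delicate second-order expansion on a $\sqrt{\log E}$ scale is required. Your trial-function argument $\psi_R=(1-\lambda w)\chi_R$, with $w$ the Newtonian potential solving $\Delta w=V$, sidesteps that entirely: the $O(\lambda)$ contribution vanishes by $\int V=0$, the $O(\lambda^2)$ term collapses to $-\lambda^2\|\nabla w\|^2$ via $\int Vw=\|\nabla w\|^2>0$, and taking $R=\exp(\lambda^{-2})$ makes the $2\pi/\log R$ cost negligible. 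For the hardest sub-case this is a genuinely different and arguably more transparent route; it also immediately exhibits the exponentially small scale of the weak-coupling bound-state energy, which in Simon's treatment emerges only after the eigenvalue expansion.

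Two points should be tightened. In the converse direction ($\int V>0\Rightarrow$ no bound state) you treat $M_E=c_EP+L+r_E$ as if its spectrum split into a dominant eigenvalue $\sim c_E\int V$ plus a uniformly bounded remainder; since $M_E$ carries the $\mathrm{sgn}(V)$ factor it is not self-adjoint, and spectra of non-normal sums do not decouple this way. One should either symmetrize (the nonzero spectrum of $AB$ equals that of $BA$, which is what Simon uses) or perform a Schur-complement reduction onto $\mathrm{ran}\,P$. Secondly, the claim $\nabla w\in L^2(\R^2)$ deserves a sentence: on the Fourier side $\|\nabla w\|^2=\int|\hat V(\xi)|^2|\xi|^{-2}\,\rmd\xi$, and convergence near $\xi=0$ uses $\hat V(0)=\int V=0$ together with the H\"older continuity of $\hat V$ coming from $\int|x|^\delta|V|<\infty$, while convergence at infinity uses the Hausdorff--Young decay of $\hat V$ coming from $V\in L^{1+\delta}$. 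This is exactly where both integrability hypotheses of the theorem enter, and it is worth making that explicit.
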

 These dimensions are very different from the others. In fact, by the Lieb-Thirring inequality (see \cite{Li}), we know that, in dimension $n\geq3$, if the negative part of $V$ is in $L^{n/2}(\R^n)$ with norm small enough, then $H=\Delta+V$ does not have any negative eigenvalue. This two results imply that in dimensions 1 and 2, a Schr\"odinger operator with a non positive potential which satisfies assumptions like in Theorems \ref{th:BoGo} and \ref{th:A_F 2} can have negative eigenvalue. Thus, we have to assume some positivity of the first order commutator or to use $c_1>0$.

\section{A conjugate operator with decay in the position variable}\label{s:A_F}
Now we will see how we can change the conjugate operator to obtain other conditions on the potential which require less decay on the derivatives of potentials. To do this, we will apply Theorem \ref{th: Theorem B.1}  with the following conjugate operator $A_F$.

Let $n\geq3$, $0\leq\mu<1$ and let $F(q)=q\jap{q}^{-\mu}$. Let
\[A_F=\frac{1}{2}(p\cdot F(q)+F(q)\cdot p)=\jap{q}^{-\mu/2}A_D\jap{q}^{-\mu/2}.\]
Notice that, by Proposition 4.2.3 of \cite{ABG}, we know that $A_F$ is essentially self-adjoint on $C^\infty_c$.
By a simple computation on the form domain $C_c^\infty$, we have
\begin{eqnarray*}
[\Delta,\rmi A_F]&=&[\Delta,\rmi \jap{q}^{-\mu/2}]A_D\jap{q}^{-\mu/2}+\jap{q}^{-\mu/2}[\Delta,\rmi A_D]\jap{q}^{-\mu/2}\\
& &+\jap{q}^{-\mu/2}A_D[\Delta,\rmi \jap{q}^{-\mu/2}]\\
&=&2\jap{q}^{-\mu/2}\left(\Delta-\mu A_D\jap{q}^{-2}A_D\right)\jap{q}^{-\mu/2}.
\end{eqnarray*}
For all $f\in\ch^1$, by Hardy inequality, we have
\begin{eqnarray*}
(f,A_D\jap{q}^{-2}A_Df)&=&\|\jap{q}^{-1}A_Df\|^2\\
&\leq&\left(\|q\jap{q}^{-1}\cdot pf\|+\frac{n}{2}\|\jap{q}^{-1}f\|\right)^2\\
&\leq&\left(1+\frac{n}{n-2}\right)^2\|\nabla f\|^2.
\end{eqnarray*}
In particular, if $0\leq\mu<(1+\frac{n}{n-2})^{-2}$, 
\[[\Delta,\rmi A_F]\geq 2(1-\mu(1+\frac{n}{n-2})^{2})\jap{q}^{-\mu/2}\Delta\jap{q}^{-\mu/2}\geq 0.\]
Thus, we can take $S=c\jap{q}^{-\mu/2}\Delta\jap{q}^{-\mu/2}$ with $c>0$ with domain $\cd(S)=\ch^2$. Remark that since $S^{1/2}=\sqrt{c} |p| \jap{q}^{-\mu/2}$, $\cg=\cd(S^{1/2})=\ch^1$. In particular, by Proposition 4.2.4 of \cite{ABG}, we know that the $C_0$-group associated to $A_F$ leaves $\cg$ invariant. To prove that the $C_0$-group associated to $A_F$ leaves $\cs$ invariant, we will show that $[S,\rmi A_F]$ is bounded from $\cs$ to $\cs^*$. We will consider this commutator as a form with domain $C^\infty_c$ and we use the same notation for its closure. By a simple computation, 
\begin{eqnarray*}
[S,\rmi A_F]&=&c[\jap{q}^{-\mu/2},\rmi A_F]\Delta\jap{q}^{-\mu/2}+c\jap{q}^{-\mu/2}[\Delta,\rmi A_F]\jap{q}^{-\mu/2}\\
& &+c\jap{q}^{-\mu/2}\Delta[\jap{q}^{-\mu/2},\rmi A_F]\\
&=&c\jap{q}^{-\mu/2}[\Delta,\rmi A_F]\jap{q}^{-\mu/2}\\
& &+c\frac{\mu}{2}\left(\jap{q}^{-\mu/2}|q|^2\jap{q}^{-2}\Delta\jap{q}^{-\mu/2}+\jap{q}^{-\mu/2}\Delta|q|^2\jap{q}^{-2}\jap{q}^{-\mu/2}\right).
\end{eqnarray*}  
Using the form of $[\Delta,\rmi A_F]$, by a simple computation, we can see that the first term on the right hand side is bounded from $\cs$ to $\cs^*$. For the other term, by a simple computation, we have:
\begin{eqnarray*}
& &\jap{q}^{-\mu/2}|q|^2\jap{q}^{-2}\Delta\jap{q}^{-\mu/2}+\jap{q}^{-\mu/2}\Delta|q|^2\jap{q}^{-2}\jap{q}^{-\mu/2}\\
&=&\jap{q}^{-\mu/2}\left(2p|q|^2\jap{q}^{-2}p+[p,[p,|q|^2\jap{q}^{-2}]\right)\jap{q}^{-\mu/2}
\end{eqnarray*}

By Hardy inequality, since $|q|^2 [p,[p,|q|^2\jap{q}^{-2}]$ is bounded, we can see that this term is bounded from $\cs$ to $\cs^*$. Thus, by \cite[Remark B.1]{BoGo}, the $C_0$-group associated to $A_F$ leaves $\cs$ invariant. Therefore, we have the following:
\begin{theorem}\label{thm: AF3}
Let $0\leq\mu<(1+\frac{n}{n-2})^{-2}$.
Let $V_1,V_2\in L^1_{loc}(\R^n,\R)$ and $H=\Delta+V_1+\rmi V_2$. Assume that
\begin{enumerate}
\item $V_k$ are $\Delta$-compact and $V_2\geq0$;

\item $q\jap{q}^{-\mu}\cdot\nabla V_k$ are $\Delta$-compact;

\item There is \[C>-\frac{(n-2)^2(1-\mu(1+\frac{n}{n-2})^{2})}{2}\] such that $-x\cdot\nabla V_1(x)\geq \frac{C}{|x|^2}$ for all $x\in\R^n$;

\item There is $C'>0$ such that for all $x\in\R^n$, $|(x\jap{x}^{-\mu}\cdot\nabla)^2 V_k(x)|\leq C'|x|^{-2}\jap{x}^{-\mu}$.
\end{enumerate}
Then Theorem \ref{th: Theorem B.1} applies and
\[\sup_{\lambda\in\R,\eta>0}\|\jap{q}^{-\mu/2}|q|^{-1}(H-\lambda+\rmi \eta)^{-1}|q|^{-1}\jap{q}^{-\mu/2}\|<\infty.\]

Moreover, $H$ does not have eigenvalue in $\R$.
\end{theorem}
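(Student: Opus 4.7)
The plan is to verify the hypotheses of Theorem \ref{th: Theorem B.1} in the case $c_1=0$, with the conjugate operator $A_F = \jap{q}^{-\mu/2} A_D \jap{q}^{-\mu/2}$. Most of the structural input has already been assembled in the preceding discussion: the candidate $S = c\jap{q}^{-\mu/2}\Delta\jap{q}^{-\mu/2}$ (for some $c>0$ to be chosen), the form domain $\cg = \ch^1$, the completion $\cs$ with norm $\|f\|_\cs = \|\,|p|\jap{q}^{-\mu/2}f\|$, and the invariance of $\cg$ and $\cs$ under the $C_0$-group generated by $A_F$. What remains is to insert the potential $V = V_1+\rmi V_2$ and to verify the regularity and the commutator estimates.

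First I would compute the form commutator $[V_k,\rmi A_F] = -(q\cdot\nabla V_k)\jap{q}^{-\mu}$; assumption (2) makes this $\Delta$-compact, hence bounded $\cg\to\cg^*$, so together with the preceding expression for $[\Delta,\rmi A_F]$ one has $H^\pm\in C^1(A_F,\cg,\cg^*)$. For the lower bound in \eqref{eq: B.8}, the preceding computation yields $[\Delta,\rmi A_F]\geq 2(1-\mu(1+\tfrac{n}{n-2})^2)\jap{q}^{-\mu/2}\Delta\jap{q}^{-\mu/2}$, while assumption (3) gives $[V_1,\rmi A_F]\geq C\jap{q}^{-\mu/2}|q|^{-2}\jap{q}^{-\mu/2}$. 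In the worst case $C<0$, Hardy's inequality applied to $g=\jap{q}^{-\mu/2}f$ gives $\jap{q}^{-\mu/2}|q|^{-2}\jap{q}^{-\mu/2}\leq\tfrac{4}{(n-2)^2}\jap{q}^{-\mu/2}\Delta\jap{q}^{-\mu/2}$, and the numerical lower bound on $C$ imposed in (3) is precisely what is needed so that the resulting overall coefficient in front of $\jap{q}^{-\mu/2}\Delta\jap{q}^{-\mu/2}$ stays strictly positive. Choosing $c>0$ below this coefficient produces $[\Re(H^\pm),\rmi A_F]\geq S$, and $S$ is injective because $|p|\jap{q}^{-\mu/2}$ is; the second line of \eqref{eq: B.8} is vacuous since $c_1=0$.

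For the double commutator $[[V_k,\rmi A_F],\rmi A_F]$, Leibniz expansion produces one contribution proportional to $(x\jap{x}^{-\mu}\cdot\nabla)^2V_k$, controlled pointwise by $C'|x|^{-2}\jap{x}^{-\mu}$ via assumption (4), plus terms in which a derivative has hit $F(q)=q\jap{q}^{-\mu}$; each of these is pointwise majorised in absolute value by $|x|^{-2}\jap{x}^{-\mu}$ times a first derivative of $V_k$ and so by (2) fits the same Hardy scheme. Combined with $[[\Delta,\rmi A_F],\rmi A_F]$, whose structure is analogous to the $[S,\rmi A_F]$ computation already carried out in the text, the inequality $\jap{q}^{-\mu/2}|q|^{-2}\jap{q}^{-\mu/2}\leq\tfrac{4}{(n-2)^2c}S$ reduces every piece to a multiple of $\|S^{1/2}f\|^2$, giving \eqref{eq: B.10} and \emph{a fortiori} $H^\pm\in C^2(A_F,\cg,\cg^*)$; condition \eqref{eq: B.7} then follows from $C^1$-regularity by a standard density argument on $\cd\cap\cd(A_F)$. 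Applying Theorem \ref{th: Theorem B.1} yields \eqref{eq: B.11}; writing $S^{-1/2}=c^{-1/2}\jap{q}^{\mu/2}|p|^{-1}$ and $A_F=\jap{q}^{-\mu/2}A_D\jap{q}^{-\mu/2}$, one final application of Hardy converts the right-hand side into a multiple of $\|\,|q|^{-1}\jap{q}^{-\mu/2}f\|^2$, producing the announced uniform bound, and absence of real eigenvalues then follows as usual from the finiteness of the boundary values of the resolvent.

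The main obstacle is the second-order commutator estimate: the expansions of $[[\Delta,A_F],A_F]$ and $[[V_k,A_F],A_F]$ produce numerous cross terms in which the weight $\jap{q}^{-\mu/2}$ and the singular factor $|q|^{-2}$ are not properly aligned with $|p|$, and one has to repeatedly invoke Hardy together with boundedness of expressions such as $|q|^2[p,[p,|q|^2\jap{q}^{-2}]]$ to reduce each contribution to a multiple of $\|S^{1/2}f\|^2$.
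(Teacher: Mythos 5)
Your proposal is correct and follows essentially the same route as the paper: the paper's own proof of this theorem is a one-line reference to the set-up already carried out in Section 5 (the computation of $[\Delta,\rmi A_F]$, the choice $S=c\jap{q}^{-\mu/2}\Delta\jap{q}^{-\mu/2}$ with $\cg=\ch^1$, and the invariance of $\cg$ and $\cs$) combined with the Hardy-inequality arguments of Theorems \ref{th:BoGo} and \ref{th:A_F 2}, and that is precisely what you reconstruct, including the identification of the numerical threshold on $C$ in assumption (3) as the condition keeping the coefficient of $\jap{q}^{-\mu/2}\Delta\jap{q}^{-\mu/2}$ strictly positive. The only loose point is attributing \eqref{eq: B.7} to ``a standard density argument from $C^1$-regularity''; what it actually rests on is that $[H,\rmi A_F]$ maps $\cd(H)=\ch^2$ boundedly into $\ch$, which your assumption (2) (giving $\Delta$-boundedness of $F\cdot\nabla V_k$) does supply.
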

To prove this theorem, we use the abstract result of the method of the weakly conjugate operator and the Hardy inequality as in the proof of Theorems \ref{th:BoGo} and \ref{th:A_F 2}.

If $n=1$, as we saw in the previous section, it is not sufficient to suppose only that the derivatives of $V$ have sufficient decay at infinity. To avoid the possible negative eigenvalue, we can assume some positivity of the first order commutator of the potential $V_1$. To simplify notation, remark that 
\begin{eqnarray*}
F'(x)&=&(1-\mu)\jap{x}^{-\mu}+\mu\jap{x}^{-\mu-2},\\
F''(x)&=&-\mu x\jap{x}^{-\mu-2}\left(1-\mu+(\mu+2)\jap{x}^{-2}\right),\\
F'''(x)&=&\mu(1-\mu)(1+\mu)\jap{x}^{-\mu-2}+4\mu(\mu+2)\jap{x}^{-\mu-4}\\
& &+\mu(\mu+2)(\mu+4)\jap{x}^{-\mu-6} 
\end{eqnarray*}
\begin{theorem}\label{th: A_F dim1}
Let  $0\leq\mu\leq1$. Let $F(x)=x\jap{x}^{-\mu}$.
Let $V_1,V_2\in L^1_{loc}(\R,\R)$ and $H=\Delta+V_1+\rmi V_2$. Assume that
\begin{enumerate}
\item $V_i$ are $\Delta$-compact and $V_2\geq0$;

\item $F(q) V_i'$ are $\Delta$-compact;

\item \[W(x)=-F(x) V_1'(x)-\frac{1}{2}F'''(x)\geq0\] for all $x\in\R$;

\item There is $C_1,C_2>0$ such that
\[\biggl|2F(x)W'(x)+F'''(x)F'(x)+(F''(x))^2\biggr|\leq C_1 W(x),\forall x\in\R\]
and 
\[\biggl|F(x)^2 V_2''(x)+F(x)F'(x)V_2'(x)\biggr|\leq C_2 W(x),\forall x\in\R.\]
\end{enumerate}
Then Theorem \ref{th: Theorem B.1} applies and there are $c>0$ and $\mu_0>0$ such that
\[\left|(f,(H-\lambda+\rmi \eta)^{-1}f)\right|\leq c\left(\|S^{-1/2}f\|^2+\|S^{-1/2}A_Ff\|^2\right),\]
with $S=2pF'(q)p+W(q)$ and $A_F=\frac{1}{2}(pF(q)+F(q) p)$.

Moreover, $H$ does not have eigenvalue in $\R$.
\end{theorem}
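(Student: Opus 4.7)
The plan is to apply Theorem~\ref{th: Theorem B.1} with conjugate operator $A = A_F$, auxiliary positive operator $S = 2pF'(q)p + W(q)$, and parameter $c_1 = 0$. Essential self-adjointness of $A_F$ on $C^\infty_c(\R)$ follows from Proposition~4.2.3 of \cite{ABG}, as $F$ is smooth with bounded first derivative. The operator $S$ is nonnegative: the explicit formula $F'(x) = (1-\mu)\jap{x}^{-\mu} + \mu\jap{x}^{-\mu-2}$ yields $F'(x) > 0$ for every $x \in \R$ and $\mu \in [0,1]$, and $W \geq 0$ by hypothesis~(3). Injectivity follows because $Sf = 0$ forces $\sqrt{F'(q)}\,pf = 0$, hence $pf = 0$, so $f$ is a constant in $L^2(\R)$ and thus zero. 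I take $\cg = \cd(S^{1/2})$ and let $\cs$ denote its completion.

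A direct computation on $C^\infty_c(\R)$ gives
\[
[\Delta, iA_F] = 2pF'(q)p - \tfrac{1}{2}F'''(q), \qquad [V_k, iA_F] = -F(q)V_k'(q), \quad k=1,2,
\]
so $[\Re H^\pm, iA_F] = 2pF'(q)p + W(q) = S$ as quadratic forms on $\cg$, which is exactly \eqref{eq: B.8} with $c_1 = 0$; the accompanying condition $\pm c_1[\Im H^\pm, iA_F] \geq 0$ holds trivially. Hypothesis~(2) turns $FV_k'$ into bounded forms on $\cg$, giving $H^\pm \in C^1(A_F, \cg, \cg^*)$; invariance of $\cg$ under $e^{itA_F}$ comes from Proposition~4.2.4 of \cite{ABG}, and invariance of $\cs$ from \cite[Remark B.1]{BoGo} after checking that $[S, iA_F]$ extends to a bounded form $\cs \to \cs^*$.

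The technical heart of the argument is \eqref{eq: B.10}. Applying the derivation identity $[G(q), iA_F] = -F(q)G'(q)$ to $G = W$ and $G = F'''$, and using the operator identity
\[
[pF'(q)p, iA_F] = 2p(F')^2(q)p - pFF''(q)p - \tfrac{1}{2}\bigl((F''(q))^2 + F'(q)F'''(q)\bigr),
\]
one checks that $[[V_1, iA_F], iA_F] = FF'V_1' + F^2V_1''$ collapses to $-FW' - \tfrac{1}{2}FF''''$ via the substitution $FV_1' = -W - \tfrac{1}{2}F'''$; the $F''''$ contributions then cancel against those produced by $[[\Delta, iA_F], iA_F] = 2[pF'(q)p, iA_F] + \tfrac{1}{2}FF''''$. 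What remains in $[S, iA_F]$ splits into a kinetic piece $4p(F')^2(q)p - 2pFF''(q)p$, bounded by a constant multiple of $pF'(q)p$ thanks to the elementary pointwise inequalities $(F')^2 \leq F'$ and $|FF''| \leq C F'$, and a multiplication piece whose modulus is controlled by the expression $|2FW' + F'''F' + (F'')^2|$ of hypothesis~(4a), hence by $C_1 W$ and so by $\|S^{1/2}f\|^2$. The imaginary contribution $\pm i(FF'V_2' + F^2V_2'')(q)$ is bounded by $C_2 W$ via hypothesis~(4b).

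With \eqref{eq: B.7}, \eqref{eq: B.8}, \eqref{eq: B.10} and the $C^2(A_F, \cg, \cg^*)$ regularity established, Theorem~\ref{th: Theorem B.1} yields the announced weighted resolvent bound. Absence of real eigenvalues follows in the standard way: $Hf = \lambda f$ with $\lambda \in \R$ gives $(f, V_2 f) = 0$ by $V_2 \geq 0$, so $V_2 f = 0$ and $f$ is an eigenfunction of $\Re H$; then by the virial identity $(f, S f) = (f, [\Re H - \lambda, iA_F] f) = 0$, forcing $f = 0$ by injectivity of $S$. The principal obstacle I anticipate is the bookkeeping in the double commutator: verifying the identity for $[pF'(q)p, iA_F]$, tracking the $F''''$ cancellation between the contributions of $\Delta$ and $V_1$, and making the surviving multiplication residue fit the combination singled out by hypothesis~(4a).
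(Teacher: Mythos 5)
Your proposal is correct and follows essentially the same route as the paper: apply Theorem~\ref{th: Theorem B.1} with $A_F$, $S=2pF'(q)p+W(q)$ and $c_1=0$, using $[\Delta,\rmi A_F]=2pF'(q)p-\tfrac12F'''(q)$ and $[V_k,\rmi A_F]=-F(q)V_k'(q)$ to get \eqref{eq: B.8}, then computing the double commutator and controlling its kinetic part by $pF'(q)p$ and its multiplication part by hypothesis~(4). Your bookkeeping (computing $[[V_1,\rmi A_F],\rmi A_F]$ directly and substituting $FV_1'=-W-\tfrac12 F'''$, rather than differentiating $S$ as in the paper) and your explicit checks of injectivity of $S$ and of the virial argument for absence of eigenvalues are fine additions, not deviations.
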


Remark that, a priori, if $\mu>0$, we do not impose that $xV_i'(x)$ is bounded; Theorem \ref{th: A_F dim1} applies if $x V_i'(x)$ as the same size as $\jap{x}^\mu$. In particular, if $\mu=1$, we only require that $V_i'$ is a bounded function.

\begin{proof}[Theorem \ref{th: A_F dim1}]
To prove this result, we only have to show that our assumptions imply assumptions of Theorem \ref{th: Theorem B.1}. Remark that we can write 
\[A_F=\frac{1}{2}(pF(q)+F(q) p)=pF(q)+\frac{\rmi}{2}F'(q)=F(q)p-\frac{\rmi}{2}F'(q).\]
By a simple computation, we have:
\begin{eqnarray*}
[\Delta,\rmi A_F]&=&[p^2,\rmi A_F]\\
&=&p[p,\rmi A_F]+[p,\rmi A_F]p\\
&=&p[p,\rmi F(q)p]+[p,\rmi pF(q)]p+\frac{\rmi}{2}\left([p,\rmi F'(q)]p-p[p,\rmi F'(q)]\right)\\
&=&2p[p,\rmi F(q)]p-\frac{1}{2}[p,\rmi[p,\rmi F'(q)]]\\
&=&2pF'(q)p-\frac{1}{2} F'''(q).
\end{eqnarray*}
Thus, we have:
\begin{eqnarray*}
[H,\rmi A_F]&=&2pF'(q)p-\frac{1}{2} F'''(q)-F(q)V_1'(q)-\rmi F(q)V_2'(q)\\
&=&2pF'(q)p+W(q)-\rmi F(q)V_2'(q)\\
&=& S-\rmi F(q)V_2'(q).
\end{eqnarray*}
Therefore, by assumptions, we know that \eqref{eq: B.7} and \eqref{eq: B.8} are satisfied. To prove that \eqref{eq: B.10} is true, we have to calculate the second order commutator. Since $W(q)$ is a multiplication operator, we have
\[[W,\rmi A_F]=-F(q)W'(q).\]
By a similar calculus on $-\rmi F(q)V_2'(q)$, we deduce that, by assumptions, $[[V_2,\rmi A_F],\rmi A_F]$ is bounded from $\cs$ to $\cs^*$.

For the last part of the second order commutator, we have:
\begin{eqnarray*}
[pF'(q)p,\rmi A_F]&=&[p,\rmi A_F]F'(q)p+p[F'(q),\rmi A_F]p+pF'(q)[p,\rmi A_F]\\
&=&p[p,\rmi F(q)]F'(q)p+\frac{\rmi}{2}[p,\rmi F'(q)]F'(q)p-pF(q)F''(q)p\\
& &+pF'(q)[p,\rmi F(q)]p-\frac{\rmi}{2}pF'(q)[p,\rmi F'(q)]\\
&=&p\left(2F'(q)^2-F(q)F''(q)\right)p+\frac{\rmi}{2}\left(F''(q)F'(q)p-pF'(q)F''(q)\right)\\
&=&p\left(2F'(q)^2-F(q)F''(q)\right)p-\frac{1}{2}[p,\rmi F''(q)F'(q)]\\
&=&p\left(2F'(q)^2-F(q)F''(q)\right)p-\frac{1}{2}\left(F'''(q)F'(q)+F''(q)^2\right).
\end{eqnarray*}
Thus, we have
\begin{eqnarray*}
[[H,\rmi A_F],\rmi A_F]&=&p\left(2F'(q)^2-F(q)F''(q)\right)p\\
& &-\frac{1}{2}\left(2F(q)W'(q)+F'''(q)F'(q)+F''(q)^2\right)+\rmi[[V_2,\rmi A_F],\rmi A_F].
\end{eqnarray*}
Since $(2F'(q)^2-F(q)F''(q))F'(q)^{-1}$ is bounded, by assumptions, \eqref{eq: B.10} is satisfied and, thus, Theorem \ref{th: A_F dim1} is a consequence of Theorem \ref{th: Theorem B.1}
\qed
\end{proof}

\section{A conjugate operator with decay in the momentum variable}\label{s:A_u}
In this section, we will prove Theorem \ref{th:sr cond}.

Let $\lambda:\R^n\rightarrow\R$ a positive function of class $C^\infty$, bounded with all derivatives bounded. We assume moreover that, for all, $i=1,\cdots,n$, $x_i\partial_{x_i}\lambda(x)$ is bounded.

Let 
\[A_u=\frac{1}{2}(q\cdot p\lambda(p)+p\lambda(p)\cdot q).\]
By \cite[Proposition 7.6.3]{ABG}, we know that $A_u$ is essentially self-adjoint on $L^2(\R^n)$ with domain $C^\infty_c$. Moreover,
\[[\Delta,\rmi A_u]=2\Delta\lambda(p).\]
Thus, let $S=c\Delta\lambda(p)$ with $c\in (0,2]$. Since $\lambda(p)>0$, $\lambda(p)$ is injective. This implies that $S$ is injective and positive. Moreover $\cg=\cd(S^{1/2})=\ch^{m}$ with $m\in[0,1]$. In particular, since $\exp(\rmi tA_u)$ leaves $\ch^t_s$ invariants (see \cite[Proposition 4.2.4]{ABG}), it also leaves $\cg$ invariant. Moreover, if we denote $S_1=\Delta \lambda(p)$, we have:

\[
[S_1,\rmi A_u]=2\Delta \lambda^2(p)+\lambda(p)\sum_{k=1}^n p_k^3\partial_{x_k}\lambda(p).
\]
In particular, for $f\in\cd(S)\cap\cd(A_u)$:
\begin{eqnarray*}
\left|(f,[S_1,\rmi A_u]f)\right|&\leq&2\|S_1^{1/2}f\|\|\lambda(p)S_1^{1/2}f\|\\
& &+\sum_{k=1}^n\left|(p_k\lambda^{1/2}(p)f,p_k\partial_{x_k}\lambda(p)p_k\lambda^{1/2}(p)f)\right|\\
&\leq&2\|\lambda(p)\|\|S_1^{1/2}f\|^2+\sum_{k=1}^n\|p_k\partial_{x_k}\lambda(p)\|\|p_k\lambda^{1/2}(p)f\|^2\\
&\leq&\left(2\|\lambda(p)\|+\sup_{k}\|p_k\partial_{x_k}\lambda(p)\|\right)\|S_1^{1/2}f\|^2.
\end{eqnarray*} 
This implies $S\in C^1(A_u,\cs,\cs^*)$ which implies by \cite[Remark B.1]{BoGo} the invariance of $\cs$ under the unitary group generated by $A_u$. Therefore  $S\in C^1(A_u)$ and by \cite[Remark B.1]{BoGo}, we can show that $\exp(\rmi tA_u)$ leaves $\cs$ invariant.
Using $A_u$ as conjugate operator in Theorem \ref{th: Theorem B.1}, we have the following
\begin{lemma}\label{l:A_u}
Let $V_1,V_2\in L^1_{loc}(\R^n,\R)$ and $H=\Delta+V_1+\rmi V_2$.
Assume that
\begin{enumerate}[label=(\textbf{H.\arabic*}),ref=\textbf{(H.\arabic*)}]
\item $V_i$ are $\Delta$-bounded with bound less than $1$ and $V_2\geq0$;

\item $ [V_i,\rmi A_u]$ and $[[V,\rmi A_u],\rmi A_u]$ are $\Delta$-bounded (or $\ch^1\rightarrow\ch^{-1}$ bounded) and 
\begin{equation}\label{eq:A_u ordre 2}
\left|(f, [[V_i,\rmi A_u],\rmi A_u]f)\right|\leq C\|p\lambda^{1/2}(p)f\|^2;
\end{equation}

\item There is $C'<2$ such that
\begin{equation}\label{eq:Virial A_u}
(f,[V_1,\rmi A_u]f)\geq C'\|p\lambda^{1/2}(p)f\|^2.
\end{equation}
\end{enumerate}
Then Theorem \ref{th: Theorem B.1} applies and
\[\sup_{\rho\in\R,\eta>0}\|\lambda^{1/2}(p)|q|^{-1}(H-\rho+\rmi \eta)^{-1}|q|^{-1}\lambda^{1/2}(p)\|<\infty.\]

Moreover, $H$ does not have eigenvalue in $\R$.
\end{lemma}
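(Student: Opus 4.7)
The plan is to apply Theorem \ref{th: Theorem B.1} with $A=A_u$, $c_1=0$, and $S=(2+C')\Delta\lambda(p)$ (the hypothesis on $C'$ being read so that $2+C'>0$, which is what makes $S$ strictly positive). The setup preceding the lemma already establishes essential self-adjointness of $A_u$, positivity and injectivity of $S$, the identification $\cg=\cd(S^{1/2})$, the property $S\in C^1(A_u)$, and the invariance of both $\cg$ and $\cs$ under the $C_0$-group generated by $A_u$. Setting $H^\pm=\Delta+V_1\pm\rmi V_2$, hypothesis (H.1) gives $(H^+)^*=H^-$, common domain $\cd=\ch^2$, and $\pm\Im(H^\pm)\geq 0$. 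What remains is to verify \eqref{eq: B.7}, \eqref{eq: B.8}, \eqref{eq: B.10}, and the regularity $H^\pm\in C^2(A_u,\cg,\cg^*)$, and then to unpack the abstract resolvent bound into the stated weighted form.

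For the $C^2$-regularity I would combine the explicit identity $[\Delta,\rmi A_u]=2\Delta\lambda(p)$ and the $\cg\to\cg^*$ boundedness of $[[\Delta,\rmi A_u],\rmi A_u]$ (which is essentially the computation already carried out for $[S_1,\rmi A_u]$ above, using $\lambda\in C^\infty_b$ and $x_i\partial_{x_i}\lambda\in L^\infty$) with hypothesis (H.2) for the potential part. The first-order Mourre estimate \eqref{eq: B.8} I would obtain from
\[[\Re(H^\pm),\rmi A_u]=2\Delta\lambda(p)+[V_1,\rmi A_u]\]
and (H.3), yielding $(f,[\Re(H^\pm),\rmi A_u]f)\geq(2+C')\|p\lambda^{1/2}(p)f\|^2=(f,Sf)$ as forms on $\cg$; the sign hypothesis $\pm c_1[\Im(H^\pm),\rmi A_u]\geq 0$ is vacuous since $c_1=0$. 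The second-order estimate \eqref{eq: B.10} follows from \eqref{eq:A_u ordre 2} for the $V_i$-contribution and a direct computation for the Laplacian contribution that produces the same type of $C\|S^{1/2}f\|^2$-bound. Finally, \eqref{eq: B.7} is a standard consequence of $C^1(A_u,\cg,\cg^*)$-regularity together with $\cd\subset\cg$.

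Applying Theorem \ref{th: Theorem B.1} then yields $|(f,(H-\rho+\rmi\eta)^{-1}f)|\leq c'(\|S^{-1/2}f\|^2+\|S^{-1/2}A_uf\|^2)$ uniformly in $\rho\in\R$ and $\eta\in(0,\mu_0)$. The hard part will be the final translation into the weighted statement: since $S^{-1/2}$ is proportional to $|p|^{-1}\lambda^{-1/2}(p)$, I would expand $A_u=\frac{1}{2}(q\cdot p\lambda(p)+p\lambda(p)\cdot q)$, move $\lambda^{1/2}(p)$ past $q$ using that $[\lambda^{1/2}(p),q]$ is bounded under the symbol assumptions on $\lambda$, and then use the Hardy inequality to convert $|p|^{-1}$-factors into $|q|$-factors; a duality argument then delivers the bound on $\|\lambda^{1/2}(p)|q|^{-1}(H-\rho+\rmi\eta)^{-1}|q|^{-1}\lambda^{1/2}(p)\|$. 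The absence of real eigenvalues is then a standard consequence of the uniform-in-$\rho$ boundedness of the resolvent. The main obstacle throughout is the non-commutativity of $|q|$ with $\lambda^{1/2}(p)$, which is precisely why the hypotheses demand that $\lambda$, its derivatives, and the symbols $x_i\partial_{x_i}\lambda$ all be bounded.
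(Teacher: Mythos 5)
Your proposal is correct and follows exactly the paper's implicit route: the paper gives no separate proof of Lemma \ref{l:A_u}, since it is meant to follow directly from Theorem \ref{th: Theorem B.1} using the preceding paragraphs (essential self-adjointness of $A_u$, $[\Delta,\rmi A_u]=2\Delta\lambda(p)$, the choice $S=c\Delta\lambda(p)$, and the invariance of $\cg$ and $\cs$). You also rightly flag that the stated condition ``$C'<2$'' only makes sense if the inequality in (H.3) is read so that $2+C'>0$ (equivalently, $(f,[V_1,\rmi A_u]f)\geq -C'\|p\lambda^{1/2}(p)f\|^2$ with $0\leq C'<2$), which is consistent with how the bound on $[V,\rmi A_u]$ is actually used in the concrete Theorem that follows.
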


As in the previous section, we will give some concrete conditions on the potential which permits to apply Theorem \ref{th: Theorem B.1}.
\begin{theorem}
Let $n\geq3$.
Let $V_1,V_2\in L^1_{loc}(\R^n,\R)$ and $H=\Delta+V_1+\rmi V_2$ with $V_2\geq0$.
Assume that $|q|^2V_1$ and $|q|V_1$ are bounded with bound small enough and that $\jap{q}^3V_i$ is bounded.
Then Theorem \ref{th: Theorem B.1} applies. In particular, for all $1\leq\mu<2$,
\[\sup_{\rho\in\R,\eta>0}\|\jap{p}^{-\mu/2}|q|^{-1}(H-\rho+\rmi \eta)^{-1}|q|^{-1}\jap{p}^{-\mu/2}\|<\infty.\]

Moreover, $H$ does not have eigenvalue in $\R$.
\end{theorem}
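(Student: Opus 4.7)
The plan is to invoke Lemma \ref{l:A_u} with the choice $\lambda(p)=\jap{p}^{-\mu}$, so that the conjugate operator becomes $A_u=\frac{1}{2}\bigl(q\cdot p\jap{p}^{-\mu}+p\jap{p}^{-\mu}\cdot q\bigr)$ and $\lambda^{1/2}(p)=\jap{p}^{-\mu/2}$. The restriction $1\leq\mu<2$ is made so that $p_k\jap{p}^{-\mu}$ and $p_k^2\jap{p}^{-\mu}$ are bounded, which is what is needed to control the iterated commutator. One first checks that $\lambda$ meets the abstract requirements preceding Lemma \ref{l:A_u}: it is smooth, positive, bounded, and every derivative of $\lambda$ as well as every $x_i\partial_{x_i}\lambda$ is bounded. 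Hypothesis (H.1) is then immediate: $\jap{q}^3V_i\in L^\infty$ entails $V_i\in L^\infty$, hence $V_i$ is $\Delta$-bounded with bound zero, and $V_2\geq 0$ is part of the data.

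The main work lies in checking (H.2) and the Virial-type bound (H.3). Because $V_i$ commutes with $q$, a direct manipulation gives
\[
[V_i,\rmi A_u]=\frac{\rmi}{2}\bigl(q\cdot[V_i,p\lambda(p)]+[V_i,p\lambda(p)]\cdot q\bigr),
\]
so everything reduces to controlling $[V_i,p_k\jap{p}^{-\mu}]$. The crucial observation, and the reason to prefer $A_u$ over the generator of dilations in this setting, is that this commutator can be expressed via the Helffer-Sj\"ostrand formula from Appendix \ref{s: H-S formula} as an integral of $V_i$ sandwiched between resolvents of $\jap{p}^2$; no derivative of $V_i$ is ever invoked. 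Combining the outer factors of $q$ with the decay of $V_i$, one obtains weighted bounds controlled by $\|\jap{q}^3V_i\|_\infty$ for the second order commutator $[[V_i,\rmi A_u],\rmi A_u]$ (which brings down one extra factor of $q$ on each side) and by $\||q|V_i\|_\infty$, $\||q|^2V_i\|_\infty$ for the first order one. The Hardy inequality then converts these weighted $L^\infty$ norms into $\ch^1\to\ch^{-1}$ bounds, and, after rewriting the relevant quadratic form, into the estimate \eqref{eq:A_u ordre 2} against $\|p\lambda^{1/2}(p)f\|^2=(f,\Delta\jap{p}^{-\mu}f)$.

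For the Virial estimate (H.3), the same computation yields
\[
\bigl|(f,[V_1,\rmi A_u]f)\bigr|\leq C\bigl(\||q|V_1\|_\infty+\||q|^2V_1\|_\infty\bigr)\|p\lambda^{1/2}(p)f\|^2,
\]
so the smallness hypothesis on $|q|V_1$ and $|q|^2 V_1$ forces this prefactor to be strictly less than $2$. Consequently $2\Delta\lambda(p)+[V_1,\rmi A_u]\geq c\,\Delta\lambda(p)$ for some $c>0$, which is exactly \eqref{eq: B.8} with $c_1=0$ and $S=c\,\Delta\jap{p}^{-\mu}$. Lemma \ref{l:A_u} then delivers both the stated resolvent bound with weights $\jap{p}^{-\mu/2}|q|^{-1}$ and the absence of real eigenvalues of $H$. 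The principal technical obstacle is the Helffer-Sj\"ostrand bookkeeping needed to show that $[V_i,\rmi A_u]$ and its iterate are controlled purely by pointwise decay of $V_i$, without any smoothness assumption; once these bounds are secured, everything else is a direct application of the abstract lemma.
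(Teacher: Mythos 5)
Your high-level strategy matches the paper's: invoke Lemma \ref{l:A_u} with $\lambda(p)=\jap{p}^{-\mu}$, use the Helffer-Sj\"ostrand formula to avoid ever differentiating $V_i$, and use Hardy's inequality to convert the $L^\infty$ weighted bounds into the form estimate. However, your algebraic decomposition is different from the paper's, and this creates a gap you have not addressed. You write $[V_i,\rmi A_u]=\tfrac{\rmi}{2}\bigl(q\cdot[V_i,u(p)]+[V_i,u(p)]\cdot q\bigr)$, leaving $q$ \emph{outside} the commutator, and then invoke Helffer-Sj\"ostrand on $[V_i,u(p)]$ alone. But that integral has only $V_i$ (not $qV_i$) sandwiched between resolvents of $p$, so the outer $q$ does not automatically combine with the decay of $V_i$: to push the $q$ inside you must commute it through the resolvents, generating extra terms $[q,(z-p)^{-1}]=-\rmi(z-p)^{-2}$ that also need to be bounded. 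The paper avoids this by exploiting the two one-sided expressions $A_u=u(p)\cdot q+\tfrac{\rmi}{2}(\Div u)(p)=q\cdot u(p)-\tfrac{\rmi}{2}(\Div u)(p)$ to produce the decomposition $[V,\rmi A_u]=[qV,\rmi u(p)]+\tfrac{1}{2}\bigl(V(\Div u)(p)+(\Div u)(p)V\bigr)$, so that $qV$ is already glued together as a single multiplication operator inside the Helffer-Sj\"ostrand integral, and the divergence term is handled separately. Your approach is not wrong, but as written it silently skips the extra commutation step, and you should either carry it out or adopt the paper's grouping.

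Two smaller points. First, your justification of the restriction $1\leq\mu<2$ is incorrect: you claim it guarantees $p_k^2\jap{p}^{-\mu}$ bounded, but that requires $\mu\geq 2$, which is excluded. In the paper, $\mu<2$ is used to make $(p+\rmi)^{-1}\jap{p}^{\mu/2}$ bounded with norm less than one (this is what lets one factor out $S_1=p\jap{p}^{-\mu/2}$ on both sides of the commutator), and $\mu\geq 1$ is needed so that $V_i\in C^2(A_u,\ch^2,L^2)$ whenever $|q|^2V_i$ is bounded (a fact the paper cites from \cite{Ma1}). Second, the phrase ``resolvents of $\jap{p}^2$'' is imprecise: the almost-analytic extension in the paper is taken for the vector-valued symbol $u$ and the resolvents are those of $p$, not of $\jap{p}^2$. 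None of these issues are irreparable, but they need to be fixed for the argument to be complete.
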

\begin{proof}
Let $0<\mu<2$ and $\lambda(p)=\jap{p}^{-\mu}$.
We can write:
\[A_u=u(p)\cdot q+\frac{i}{2}(div u)(p)=q\cdot u(p)-\frac{i}{2}(div u)(p).\]
To alleviate the notations, let $S_1=p\jap{p}^{-\mu/2}$. Assume that $|q|^3V_i$ is bounded. Remark that, by assumptions on $\lambda$, we ever prove that $\Delta$ is of class $C^2(A_u,\ch^2,L^2)$. If $\mu\geq 1$, we can show that if $V_i$ satisfies $|q|^2V_i$ is bounded, then $V_i$ is of class $C^2(A_u,\ch^2,L^2)$ (voir \cite{Ma1}). This implies by sum that $H=\Delta+V_1+\rmi V_2$ is of class $C^2(A_u,\ch^2,L^2)$.
In particular, for $V=V_i$,
\begin{eqnarray}\label{eq:commut A_u}
[V,\rmi A_u]&=&V \rmi A_u-\rmi A_u V\nonumber\\
&=&qV\cdot iu(p)+\frac{1}{2}V(div u)(p)-i u(p)\cdot qV+\frac{1}{2}(div u)(p)V\nonumber\\
&=&[qV,i u(p)]+\frac{1}{2}\left(V(div u)(p)+(div u)(p)V\right).
\end{eqnarray}
For the first term, we will use the Helffer-Sj\"ostrand formula (see \ref{s: H-S formula}):
\begin{eqnarray*}
[qV,iu(p)]&=&\frac{i}{2\pi}\int_\C\frac{\partial \phi^\C}{\partial \bar{z}}(z-p)^{-1}[qV,ip](z-p)^{-1}dz\wedge d\bar{z}\\
&=&-\frac{1}{2\pi}p\int_\C\frac{\partial \phi^\C}{\partial \bar{z}}(z-p)^{-1}qV(z-p)^{-1}dz\wedge d\bar{z}\\
& &+\frac{1}{2\pi}\int_\C\frac{\partial \phi^\C}{\partial \bar{z}}(z-p)^{-1}qV(z-p)^{-1}dz\wedge d\bar{z} p\\
&=&-\frac{1}{2\pi}S_1\jap{p}^{\mu/2}\int_\C\frac{\partial \phi^\C}{\partial \bar{z}}(z-p)^{-1}qV(z-p)^{-1}dz\wedge d\bar{z}\\
& &\jap{p}^{\mu/2}(p+i)^{-1}(S_1+\rmi \jap{p}^{-\mu/2})\\
& &+\frac{1}{2\pi}(S_1+\rmi \jap{p}^{-\mu/2})(p+i)^{-1}\jap{p}^{\mu/2}\\
& &\int_\C\frac{\partial \phi^\C}{\partial \bar{z}}(z-p)^{-1}qV(z-p)^{-1}dz\wedge d\bar{z}\jap{p}^{\mu/2}S_1
\end{eqnarray*}
where $\phi^\C$ is an almost analytic extension of $u$.\\ %($u\in\cs^{1-\mu}$)
 We can remark that, since $\mu<2$ , $(p+i)^{-1}\jap{p}^{\mu/2}$ is bounded with bound less than $1$.
Denote
\[I=\frac{1}{2\pi}\int_\C\frac{\partial \phi^\C}{\partial \bar{z}}(z-p)^{-1}qV(z-p)^{-1}dz\wedge d\bar{z}.\]
Since $|q|V$ is bounded, we have
\begin{eqnarray*}
\left\|\jap{p}^{\mu/2}I\right\|&\leq&\frac{1}{\pi}\int\left|\frac{\partial \phi^\C}{\partial \bar{z}}\right|\frac{\jap{x}^{\mu/2}}{|y|}\|qV\||y|^{-1}dx\wedge dy\\
&\leq& C_1\int_{x\in\R}\int_{|y|\leq C_2\jap{x}}\jap{x}^{-(2+\mu/2)}dx\wedge dy\\
&\leq& C_3,
\end{eqnarray*}
and similarly for $\|I\jap{p}^{\mu/2}\|$. Remark that this bound depends on $\|qV\|_\infty$.

Moreover, since $|q|^2V$ is bounded, by Hardy inequality, for all $f\in\cd(S^{1/2})$
\[
\left|(f,[qV,iu(p)]f)\right|\leq (1+\frac{2}{n-2})(\left\|\jap{p}^{\mu/2}I\right|+\left\|I\jap{p}^{\mu/2}\right|)\|S_1f\|^2\leq C\||q|^2V\|_\infty\|S_1f\|^2,
\]
where $C$ depends only of $\mu$ and $n$. 
For the second term of \eqref{eq:commut A_u}, 
\begin{eqnarray*}
V(div u)(p)&=&V\jap{p}^{-\mu}(n-\mu \Delta\jap{p}^{-2})\\
&=&\jap{p}^{-\mu/2}V(n-\mu \Delta\jap{p}^{-2})\jap{p}^{-\mu/2}\\
& &+[V,\jap{p}^{-\mu/2}](n-\mu \Delta\jap{p}^{-2})\jap{p}^{-\mu/2}.
\end{eqnarray*}
By Hardy inequality, if $|q|^2V$ is bounded, 
\[\left|(f,\jap{p}^{-\mu/2}V(n-\mu \Delta\jap{p}^{-2})\jap{p}^{-\mu/2}f)\right|\leq \|n-\mu \Delta\jap{p}^{-2}\|\||q|^2V\|\|S_1 f\|^2.\]
As previously, we also have
\[\left|(f,[V,\jap{p}^{-\mu/2}](n-\mu \Delta\jap{p}^{-2})\jap{p}^{-\mu/2}f)\right|\leq C'\|n-\mu \Delta\jap{p}^{-2}\|\||q|V\|\|S_1 f\|^2,\]
where $C'$ depends only of $\mu$.
Therefore, as for $(div u)(p) V$, by \eqref{eq:commut A_u}, we have
\[\left|(f,[V,\rmi A_u]f)\right|\leq C(\||q|^2V\|_\infty+\||q|V\|_\infty)\|S_1 f\|^2.\]
In particular, if $\||q|^2V\|_\infty$ and $\||q|V\|_\infty$ are small enough, then $V_1$ satisfies \eqref{eq:Virial A_u}.

For the second order commutator, we have
\begin{eqnarray*}
[[V,\rmi A_u],\rmi A_u]&=&-VA_uA_u+2A_u VA_u-A_uA_u V\\
&=&-V(q\cdot u(p)-\frac{i}{2}(div u)(p))^2\\
& &+(u(p)\cdot q+\frac{i}{2}(div u)(p))V(q\cdot u(p)-\frac{i}{2}(div u)(p))\\
& &-(u(p)\cdot q+\frac{i}{2}(div u)(p))^2V\\
&=&[q^2V,u(p)]u(p)+u(p)[q^2V,u(p)]+B
\end{eqnarray*}
where $B$ depends only of the first and second order derivatives of $u$. As previously, by Helffer-Sj\"ostrand formula and Hardy inequality, we can see that 
\[|(f,[q^2V,u(p)]f)|\leq C''\||q|^3V\|\|S_1f\|^2.\]

For the term with $B$, as for
$V (div u)(p)$, we can show that 
\[|(f,Bf)|\leq C'''(\||q|^3V\|+\||q|^2V\|+\||q|V\|)\|S_1f\|^2.\]
Thus $V$ satisfies \eqref{eq:A_u ordre 2}.
\qed\end{proof}

\section{Other possible conjugate operators}\label{s:dim 1}

Let $k\in\N^*$, $k<n$. We can write $\R^n=\R^k\times \R^{n-k}$. Denote $(x,y)\in \R^n$ where $x\in\R^k$ and $y\in \R^{n-k}$.

Since we only have to assume that $S$ is injective, we can take a conjugate operator $A$ of the form $A=A_{k}\otimes \mathbb{1}_{\R^{n-k}}$ where $A_k$ is one of the previous conjugate operators ($A_D, A_F, A_u$) on  $L^2(\R^k)$. We obtain the same results, with similar proofs, that previously with the operator $q$ of multiplication by $(x,y)$ replaced by the operator $q_x$ of multiplication by $x$ and the gradient $\nabla$ replaced by the gradient $\nabla_x$ having only derivatives on $x$. For Theorem \ref{th:BoGo}, we get

\begin{theorem}
Let $k\geq3$.
Let $V_1,V_2\in L^1_{loc}(\R^n,\R)$ and $H=\Delta+V_1+\rmi V_2$.
Assume that
\begin{enumerate}[label=(\textbf{H.\arabic*}),ref=\textbf{(H.\arabic*)}]
\item $V_i$ are $\Delta$-bounded with bound less than one $1$ et $V_2\geq0$;

\item $\nabla_x V_i$ and $q_x\nabla_x V_i$ are $\Delta$-bounded and 
\[\left|(x\nabla_x)^2 V_i(x,y)\right|\leq \frac{C}{|x|^2}\quad \forall x\in\R^k\backslash\{0\};\]

\item There is $C'\in [0,(k-2)^2/2)$ such that
\begin{eqnarray*}
x\nabla_x V_1(x,y)&\leq&\frac{C'}{|x|^2};
\end{eqnarray*}
\end{enumerate}
Then Theorem \ref{th: Theorem B.1} applies and
\[\sup_{\lambda\in\R,\eta>0}\||q_x|^{-1}(H-\lambda+\rmi \eta)^{-1}|q_x|^{-1}\|<\infty.\]

Moreover, $H$ does not have eigenvalue in $\R$.
\end{theorem}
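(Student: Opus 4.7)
The plan is to transcribe the proof of Theorem \ref{th:BoGo} verbatim, replacing the full generator of dilations by the partial one $A=A_{D,x}\otimes\mathbb{1}_{\R^{n-k}}$, where $A_{D,x}=\frac{1}{2}(q_x\cdot p_x+p_x\cdot q_x)$ acts only on the first $k$ coordinates. Since $A$ commutes with every operator in the $y$-variables, a direct computation gives $[\Delta,\rmi A]=[\Delta_x,\rmi A_{D,x}]=2\Delta_x$. For a multiplication operator $V$ on $\R^n$ one obtains, as in the proof of Theorem \ref{th:A_F 2} with $\nabla$ replaced by $\nabla_x$, the identities $[V,\rmi A]=-q_x\cdot\nabla_x V$ and $[[V,\rmi A],\rmi A]=(q_x\cdot\nabla_x)^2 V$ as multiplication operators. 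Hypothesis (H.2) then guarantees that both commutators of $V_1$ and $V_2$ are $\Delta$-bounded, so $H\in C^2(A,\ch^1,\ch^{-1})$ and \eqref{eq: B.7} is satisfied.

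I next set $S=\kappa\Delta_x$ with $\kappa=2-\frac{4C'}{(k-2)^2}>0$, which is positive by (H.3). The operator $\Delta_x=(-\Delta_x)\otimes\mathbb{1}$ is non-negative on $L^2(\R^n)$ and injective, since $-\Delta_x$ has no $L^2$-eigenvalue at zero on $\R^k$; its form domain $\cg=\cd(S^{1/2})$ contains $\ch^1$. The partial-dilation group $(\rme^{\rmi tA}f)(x,y)=\rme^{kt/2}f(\rme^t x,y)$ visibly preserves $\ch^1$, and since $[S,\rmi A]=2S$ is $\cs\to\cs^*$ bounded, Remark B.1 of \cite{BoGo} supplies the invariance of $\cs$. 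The Mourre-type estimate \eqref{eq: B.8} with $c_1=0$ is then obtained by writing $[\Re(H),\rmi A]=2\Delta_x-q_x\cdot\nabla_x V_1$, bounding this below by $2\Delta_x-C'|q_x|^{-2}$ using (H.3), and applying the $k$-dimensional Hardy inequality on the slices $y=\mathrm{const}$ and integrating in $y$ to get $C'|q_x|^{-2}\leq\frac{4C'}{(k-2)^2}\Delta_x$; this yields $[\Re(H),\rmi A]\geq\kappa\Delta_x=S$. An identical Hardy argument, together with the pointwise bound $|(q_x\cdot\nabla_x)^2 V_i|\leq C|x|^{-2}$ from (H.2), controls the second-order commutator by a multiple of $\|S^{1/2}\cdot\|^2$ in the form sense, giving \eqref{eq: B.10}.

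Dissipativity of $H^+$ comes from $V_2\geq 0$, and the sign condition $\pm c_1[\Im(H^\pm),\rmi A]\geq 0$ is vacuous with $c_1=0$, so all hypotheses of Theorem \ref{th: Theorem B.1} are in place. The conclusion \eqref{eq: B.11} holds for every $\lambda\in\R$, and the standard conversion from this quadratic-form estimate to the weighted resolvent bound, exactly as in the proof of Theorem \ref{th:BoGo}, yields $\sup_{\lambda,\eta}\||q_x|^{-1}(H-\lambda+\rmi\eta)^{-1}|q_x|^{-1}\|<\infty$; the absence of real eigenvalues is the usual byproduct. The only genuine obstacle is the partial Hardy inequality in the $x$-variables: the constant $4/(k-2)^2$ is what forces $k\geq 3$ and pins down the sharp admissible threshold $C'<(k-2)^2/2$. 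All other steps are a mechanical transcription of the proof of Theorem \ref{th:BoGo}, reading $\nabla_x,\Delta_x,|q_x|$ for $\nabla,\Delta,|q|$ throughout.
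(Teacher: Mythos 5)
Your proposal is the mechanical transcription the paper itself announces (``the same results, with similar proofs, with $q$, $\nabla$ replaced by $q_x$, $\nabla_x$''), and the essential ingredients — the partial dilation generator $A_{D,x}\otimes\mathbb{1}$, the commutator identities $[\Delta,\rmi A]=2\Delta_x$ and $[[V,\rmi A],\rmi A]=(q_x\cdot\nabla_x)^2 V$, the choice $S=\kappa\Delta_x$ with $\kappa=2-\tfrac{4C'}{(k-2)^2}>0$, and the $k$-dimensional Hardy inequality which pins down the threshold $C'<(k-2)^2/2$ and forces $k\geq 3$ — are all correctly identified. The verification of $\pm\Im(H^\pm)\geq 0$, the $c_1=0$ case, and the passage from the quadratic-form bound \eqref{eq: B.11} to the weighted resolvent bound are exactly as in Theorem \ref{th:BoGo}/\ref{th:A_F 2}, so the proof is sound in outline.

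One point deserves more care, though it is one the paper itself glosses over in Section \ref{s:dim 1}. You write $\cg=\cd(S^{1/2})=\cd(\Delta_x^{1/2})$, taking literally the normalization of Theorem \ref{th: Theorem B.1}. But then $H=\Delta_x+\Delta_y+V_1+\rmi V_2$ is not bounded from this $\cg$ to $\cg^*$, since $\Delta_y$ has no smoothing control in $\cd(\Delta_x^{1/2})$, and the hypothesis $H^\pm\in C^2(A,\cg,\cg^*)$ ceases to be meaningful. This discrepancy does not arise in Theorems \ref{th:BoGo} and \ref{th:A_F 2} because there $S=c\Delta$ and $\cd(S^{1/2})$ coincides with $\ch^1$, the form domain of $H$. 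In the partial case one must instead take $\cg=\ch^1$ (the form domain of $H$), a proper subspace of $\cd(S^{1/2})$, with $\cs$ the completion of $\ch^1$ under $\|S^{1/2}f\|=\sqrt{\kappa}\|p_x f\|$; the chain $\cd\subset\cg\subset\cs\subset\cs^*\subset\cg^*\subset\cd^*$, the $C^2(A,\cg,\cg^*)$ regularity, the positivity \eqref{eq: B.8}, and the second-commutator bound \eqref{eq: B.10} then all hold exactly as you describe. This is a small repair — either one reads Theorem B.1 of \cite{BoGo} in its original form (where $\cg$ is not forced to equal $\cd(S^{1/2})$) or one notes that only $\cg\subset\cd(S^{1/2})$ and the boundedness of $S:\cg\to\cg^*$ are actually used — but since it is precisely the place where the partial-variable situation departs from the full one, it should be made explicit.
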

Remark that, as in Sections \ref{s:A_F} and \ref{s:A_u}, since we can not obtain the laplacian in the expression of the first order commutator between $H$ and $A$, we can not use a constant $c_1>0$. 

For Theorem \ref{th:sr cond}, we have the following

\begin{theorem}
Let $k\geq 3$.
Let $V_1,V_2\in L^1_{loc}(\R^n,\R)$ and $H=\Delta+V_1+\rmi V_2$ with $V_2\geq0$.
Assume that $|q_x|^2V_1$ and $|q_x|V_1$ are bounded with bound small enough and that $\jap{q_x}^3V_i$ is bounded.
Then Theorem \ref{th: Theorem B.1} applies. In particular, for all $1\leq\mu<2$,
\[\sup_{\rho\in\R,\eta>0}\|\jap{p_x}^{-\mu/2}|q_x|^{-1}(H-\rho+\rmi \eta)^{-1}|q_x|^{-1}\jap{p_x}^{-\mu/2}\|<\infty.\]

Moreover, $H$ does not have eigenvalue in $\R$.
\end{theorem}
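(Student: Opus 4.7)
The plan is to carry over the proof of Theorem \ref{th:sr cond} to the anisotropic setting by using the tensor-product conjugate operator $A = A_u \otimes \mathbb{1}_{\R^{n-k}}$, where $A_u = \frac{1}{2}(q_x\cdot p_x\lambda(p_x) + p_x\lambda(p_x)\cdot q_x)$ acts on $L^2(\R^k)$ with $\lambda(p_x) = \jap{p_x}^{-\mu}$ for some $1\leq \mu < 2$. Since $A$ involves only $(q_x, p_x)$, it commutes strongly with $\Delta_y$, so the first computation is
\[[\Delta,\rmi A]=[\Delta_x,\rmi A]=2\Delta_x\lambda(p_x),\]
and I would take $S = c\,\Delta_x\lambda(p_x)$ with $0<c\leq 2$. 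The operator $S$ is positive because $\lambda(p_x)>0$, and injective because $\Delta_x$ has trivial kernel on $L^2(\R^n)=L^2(\R^k)\otimes L^2(\R^{n-k})$ (fibrewise, the only $L^2$-in-$x$ function harmonic in $x$ is zero), so the abstract framework of Theorem \ref{th: Theorem B.1} is available.

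Second, I would check the invariance of $\cg=\cd(S^{1/2})$ and $\cs$ under $\rme^{\rmi tA}$. The group $\rme^{\rmi tA}$ acts as $\rme^{\rmi tA_u}$ fibrewise in $y$, so the invariance of $\cg$ follows directly from \cite[Proposition 4.2.4]{ABG}. For $\cs$ I would compute $[S_1,\rmi A]$ with $S_1=\Delta_x\lambda(p_x)$ exactly as in Section \ref{s:A_u}: it produces a sum of bounded functions of $p_x$ multiplying $S_1$, so the form bound $|(f,[S_1,\rmi A]f)|\leq C\|S_1^{1/2}f\|^2$ carries over verbatim, and \cite[Remark B.1]{BoGo} then gives invariance of $\cs$.

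Third, and this is the heart of the matter, I would transport the potential estimates from Section \ref{s:A_u}. The commutator $[V_i,\rmi A]$ still decomposes as in \eqref{eq:commut A_u}, but with $q_x$ in place of $q$ and $u(p_x),(\Div u)(p_x)$ functions of $p_x$ only; the Helffer--Sj\"ostrand representation reduces each piece to integrals of $(z-p_x)^{-1}q_x^\alpha V_i(z-p_x)^{-1}$. To bound these as forms on $\cg$ I would use the $k$-dimensional Hardy inequality
\[\int_{\R^n}\frac{|f(x,y)|^2}{|x|^2}\,\rmd x\,\rmd y\leq \frac{4}{(k-2)^2}\int_{\R^n}|\nabla_x f(x,y)|^2\,\rmd x\,\rmd y\]
(valid since $k\geq 3$) applied fibrewise in $y$. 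Smallness of $\||q_x|V_1\|_\infty$ and $\||q_x|^2V_1\|_\infty$ then yields the positivity estimate of the form \eqref{eq:Virial A_u}, and $\|\jap{q_x}^3 V_i\|_\infty<\infty$ yields the second-order bound \eqref{eq:A_u ordre 2}. Applying the anisotropic analogue of Lemma \ref{l:A_u} together with Theorem \ref{th: Theorem B.1} with $c_1=0$ produces the stated limiting absorption estimate with weights $|q_x|^{-1}\jap{p_x}^{-\mu/2}$ and also rules out real eigenvalues.

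The main obstacle is precisely step three: because the weight $|q_x|^{-\alpha}$ is constant (not integrable) in the transverse variable $y$, one must verify that every estimate in the proof of Theorem \ref{th:sr cond} is truly a pointwise-in-$y$ statement. This amounts to observing that each operator entering the commutator analysis is a function of $(q_x,p_x)$ multiplied by the pointwise operator of multiplication by $V_i(x,y)$, so that the Helffer--Sj\"ostrand bounds $\|\jap{p_x}^{\mu/2}I\|$ and the Hardy absorption step both become fibrewise statements with constants uniform in $y$. Once this is recorded, the proof is essentially a verbatim reprise of Section \ref{s:A_u} with $(q,p,\Delta)$ replaced by $(q_x,p_x,\Delta_x)$.
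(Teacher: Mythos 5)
Your proposal is correct and follows the route the paper indicates for this section: take the tensor-product conjugate operator $A = A_u\otimes\mathbb{1}_{\R^{n-k}}$ and rerun the proof of Theorem~\ref{th:sr cond} with $(q,p,\Delta)$ replaced by $(q_x,p_x,\Delta_x)$, using the $k$-dimensional Hardy inequality applied fibrewise in $y$ (hence the hypothesis $k\geq 3$). The paper itself leaves the argument implicit, merely asserting that the proofs are ``similar''; your write-up supplies the missing details correctly, including the observation that injectivity of $S=c\,\Delta_x\lambda(p_x)$ on $L^2(\R^n)$ and uniformity in $y$ of the Hardy and Helffer-Sj\"ostrand constants are the only points needing a word of justification.
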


This choice of conjugate operator permits to consider potentials $V$ of the form $V=W\otimes W'$ where $W:\R^k\rightarrow\R$ has good properties of decrease and of regularity and  $W':\R^{n-k}\rightarrow\R$ is bounded with a bound small enough. In particular, no conditions of decrease or on the derivatives of $W'$ are imposed.

\section{An oscillating potential}\label{s:oscillant}
In this section, we will see what conditions our different results impose on an oscillating potential to use Theorem \ref{th: Theorem B.1} with this potential for $V_1$.

Let $n\geq3$, $\alpha>0$, $\beta>0$, $k,w\in\R^*$ and $\kappa\in C^\infty_c(\R,\R)$ such that $\kappa=1$ on $[-1,1]$ and $0\leq \kappa\leq 1$.
Let \begin{equation}\label{eq:oscillant}
W_{\alpha \beta}(x)=w(1-\kappa(|x|))\frac{\sin(k|x|^\alpha)}{| x|^\beta}.
\end{equation}
Notice that this potential was already studied in \cite{BaD, DMR, DR1, DR2, JMb, Ma1, ReT1, ReT2} but the limiting absorption principle was only proved for high energy, far from the threshold zero.

Remark that $W_{\alpha \beta}$ does not satisfy assumptions of Theorem \ref{th: MaRi}. In fact, because of the oscillations, $-x\cdot\nabla W_{\alpha \beta}$ is not positive. Moreover, if $\alpha>0$, $x\cdot\nabla W_{\alpha \beta}$ can be unbounded.

By a simple calculus, we can see that 
\begin{equation}\label{eq:commut oscillant}
x\cdot \nabla W_{\alpha \beta}(x)=-w\kappa'(|x|)\frac{\sin(k|x|^\alpha)}{| x|^{\beta-1}}-\beta W_{\alpha \beta}(x)+kw\alpha(1-\kappa(|x|))\frac{\cos(k|x|^\alpha)}{| x|^{\beta-\alpha}}.
\end{equation}

We have the following
\begin{theorem}\label{th:oscillant}
Let $W_{\alpha \beta}$ as above.
\begin{itemize}
\item If $\beta\geq2$ and $\beta-2\alpha\geq2$, then, for $w$ small enough, Theorem \ref{th:BoGo} applies with $V_1=W_{\alpha \beta}$ and $V_2=0$. 

\smallskip

\item If $\beta\geq2$ and $\beta-\alpha\geq2$, then, for $w$ small enough, Theorem \ref{th:A_F 2} applies with $V_1=W_{\alpha \beta}$ and $V_2=0$. 

\smallskip

\item If $\beta\geq3$, then, for $w$ small enough, Theorem \ref{th:sr cond} applies with $V_1=W_{\alpha \beta}$ and $V_2=0$. 
\end{itemize}
\end{theorem}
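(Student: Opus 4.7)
The proof consists in verifying, for each of the three bullets, that $V_1=W_{\alpha\beta}$ together with $V_2=0$ satisfies the hypotheses of the corresponding theorem. The only non-trivial input is a set of pointwise decay estimates at infinity; the rest is bookkeeping.

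\emph{Step 1 (decay estimates).} Because $1-\kappa(|x|)$ vanishes on $\{|x|\le 1\}$, $W_{\alpha\beta}$ is smooth on $\R^n$ and identically zero near the origin, so every pointwise inequality to be checked holds trivially there. At infinity, each application of $x\cdot\nabla$ to $\sin(k|x|^\alpha)$ produces a factor of order $|x|^\alpha$ (turning $\sin$ into $\cos$ and vice-versa), while each application of $\nabla$ produces $|x|^{\alpha-1}$; iterating \eqref{eq:commut oscillant} one obtains
\[ |W_{\alpha\beta}(x)|\lesssim |w|\jap{x}^{-\beta},\quad |(x\cdot\nabla)^j W_{\alpha\beta}(x)|\lesssim |w|\jap{x}^{j\alpha-\beta}\ (j=1,2),\quad |\nabla W_{\alpha\beta}(x)|\lesssim |w|\jap{x}^{\alpha-\beta-1}, \]
with implicit constants independent of $w$.

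\emph{Step 2 (check the three theorems).} For Theorem \ref{th:BoGo} under $\beta\ge 2$ and $\beta-2\alpha\ge 2$: $W$ is bounded (hence $\Delta$-bounded with bound $<1$ for small $w$), while boundedness of $\nabla W$ and $q\cdot\nabla W$ follows from Step 1 since $\beta\ge\alpha$; the bound $|q|^2(q\cdot\nabla)^2 W(x)\lesssim |w|\jap{x}^{2\alpha-\beta+2}$ is uniform in $x$ exactly because $\beta-2\alpha\ge 2$. For hypothesis (3) with $c_1=0$, Step 1 gives $x\cdot\nabla W\le |x\cdot\nabla W|\le C|w||x|^{-2}$ (valid since $\beta-\alpha\ge 2+\alpha>2$), and $C|w|<(n-2)^2/2$ for $w$ small. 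For Theorem \ref{th:A_F 2} under $\beta\ge 2$ and $\beta-\alpha\ge 2$ the argument is identical except that the second-derivative condition is dropped, and the only remaining pointwise bound $|x|\,|\nabla W|\le C|w|/|x|^2$ needs only $\beta-\alpha\ge 2$. For Theorem \ref{th:sr cond} under $\beta\ge 3$, Step 1 gives $\||q|^k W\|_\infty\lesssim |w|$ for $k=1,2$ and $\|\jap{q}^3 W\|_\infty<\infty$; for $w$ small the smallness hypothesis is met and $V_2=0$ is trivially non-negative.

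The only potentially subtle point is to see why the oscillation of $\sin(k|x|^\alpha)$ does not destroy any hypothesis. Indeed, Theorem \ref{th: MaRi} fails on $W_{\alpha\beta}$ precisely because its condition (2) demands a sign ($-x\cdot\nabla V_1\ge 0$) that the $\cos$-term in \eqref{eq:commut oscillant} violates. The three theorems used here impose only an upper bound on $x\cdot\nabla V_1$ of the form $C/|x|^2$, which is compatible with an oscillatory, sign-indefinite derivative; once the decay rate $|x\cdot\nabla W|=O(|x|^{-2})$ is achieved by the inequalities $\beta-\alpha\ge 2$ or $\beta-2\alpha\ge 2$, the oscillations are harmless. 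This is the conceptual step to highlight; the rest is direct substitution into Step 1.
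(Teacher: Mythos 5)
Your proof is correct and follows essentially the same route as the paper: use \eqref{eq:commut oscillant} (and its iterate) to derive the decay rates of $W_{\alpha\beta}$ and its radial derivatives, then check that the resulting bounds match the decay hypotheses of each of the three theorems, absorbing the constants into the smallness of $w$. Your write-up spells out the pointwise estimates more explicitly, and correctly reads the sign condition on $x\cdot\nabla V_1$ as a one-sided $O(|x|^{-2})$ upper bound (which the paper states in slightly garbled form), but the underlying argument is the same.
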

 We make some remarks about this result:
 \begin{enumerate}
\item Remark that the condition $\beta\geq2$ and $\beta-\alpha\geq2$ is satisfied if we assume $\beta\geq2$ and $\beta-2\alpha\geq2$.

\smallskip
 
 \item For this type of potential, the Limiting Absorption Principle was already proved on all compact subset of $(0,+\infty)$ (see \cite{JMb,Ma1}). Moreover, by the Lieb-Thirring inequality, we already know that, for $w$ small enough, there is no negative eigenvalue. Here, we show that, moreover, a Limiting Absorption Principle can be proved on all $\R$ and thus that zero is not an eigenvalue.
 
 \smallskip
 
 \item We always assume that $V_2=0$ but we can choose 
 \[V_2(x)=(1-\kappa(|x|))\frac{\sin(k|x|^\gamma)+1}{| x|^\delta}\] with similar conditions on $\gamma,\delta$.
 
 \smallskip
 
 \item If we want to use Theorem \ref{th:sr cond}, remark that no conditions are impose on $\alpha$. In particular, $W_{\alpha \beta}$ can have high oscillations at infinity and we can replace $|x|^\alpha$ by $e^{|x|^2}$ or another function with the same conditions on $\beta$.
 
 \smallskip
 
 \item As it was explain in section \ref{s:dim 1}, if we write $\R^n=\R^k\times\R^{n-k}$, with $k\geq3$, we have the same conditions if $V_1(x,y)=W_{\alpha \beta}(x) W(y)$ for all $(x,y)\in\R^k\times\R^{n-k}$ with $W$ bounded and $\Delta$-compact.
 \end{enumerate}

 \begin{proof}[Theorem \ref{th:oscillant}]
 By \eqref{eq:commut oscillant}, we can see that if $\beta\geq2$ and $\beta-\alpha\geq2$, then $q\cdot\nabla V_1\geq\frac{c}{|x|^2}$, with $c\geq0$ small enough if $w$ is small enough. 
 
 \begin{itemize}
 
 \item By a simple computation, we can remark that $(q\cdot\nabla)^2V_1(x)=B_1(x)-k^2\alpha^2(1-\kappa(|x|))\frac{\sin(k|x|^\alpha)}{| x|^{\beta-2\alpha}}$ where $|q|^2B_1$ is bounded if $\beta\geq2$ and $\beta-\alpha\geq2$. Therefore, if $\beta-2\alpha\geq 2$, Theorem \ref{th:BoGo} applies.
 
 \smallskip

\item Remark that we already prove that if $\beta\geq2$ and $\beta-\alpha\geq2$, then $q\cdot\nabla V_1\geq\frac{c}{|x|^2}$, with $c\geq0$. Therefore, Theorem \ref{th:A_F 2} applies.

\smallskip

\item If $\beta\geq3$, then $|q|^3W_{\alpha \beta}$ is bounded. Moreover, if $w$ is small enough, $\||q|^2W_{\alpha \beta}\|_{\infty}$ is small enough. Thus Theorem \ref{th:sr cond}.\qed
\end{itemize}
 \end{proof}

\appendix

\section{The Helffer-Sj\"ostrand formula}\label{s: H-S formula}
Let $ad^1_B(T)=[T,B]$ be the commutator. We denote $ad^p_B(T)=[ad^{p-1}_B(T),B]$ the iterated commutator. Furthermore, if $T$ is bounded, $T$ is of class $C^k(B)$ if and only if for all $0\leq p\leq k$, $ad^p_B(T)$ is bounded.

\begin{proposition}[\cite{DG} and \cite{Mo}]
Let $\varphi\in \cs^\rho$, $\rho\in\R$. For all $l\in\R$, there is a smooth function $\varphi^\C:\C\rightarrow\C$, called an almost-analytic extension of $\varphi$, such that :
\begin{equation}
\varphi^\C_{|\R}=\varphi \qquad \frac{\partial \varphi^\C}{\partial \bar{z}}=c_1 \langle\Re(z)\rangle^{\rho-1-l}|\Im(z)|^l
\end{equation}
\begin{equation}
supp \varphi^\C\subset \{x+iy\|y|\leq c_2\langle x\rangle\}
\end{equation}
\begin{equation}
\varphi^\C(x+iy)=0, \text{ if }x\notin supp(\varphi)
\end{equation}
for constant $c_1$ and $c_2$ depending of the semi-norms of $\varphi$.
\end{proposition}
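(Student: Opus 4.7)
The strategy is to define $\varphi^\C$ by a cutoff truncated Taylor expansion in the transverse direction and to verify each listed property by direct computation. The three non-analytic conditions — agreement on $\R$, support in a cone, and vanishing off $\mathrm{supp}(\varphi)$ — will be built in by construction, while the bound on $\bar\partial\varphi^\C$ will emerge from the telescoping cancellation that Taylor sums are designed to produce.

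Concretely, fix an integer $N\geq\max(0,\lceil l\rceil)$ and a cutoff $\chi\in C^\infty_\rmc(\R,[0,1])$ with $\chi=1$ near $0$ and $\mathrm{supp}(\chi)\subset[-c_2,c_2]$. Set
\[
\varphi^\C(x+iy)\;=\;\chi\!\left(\frac{y}{\langle x\rangle}\right)\sum_{k=0}^{N}\varphi^{(k)}(x)\frac{(iy)^k}{k!}.
\]
Taking $y=0$ yields $\chi(0)\varphi(x)=\varphi(x)$; outside the cone $|y|\leq c_2\langle x\rangle$ the cutoff vanishes; and if $x\notin\mathrm{supp}(\varphi)$ then every $\varphi^{(k)}(x)$ vanishes, so $\varphi^\C(x+iy)=0$.

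The central computation is $\bar\partial\varphi^\C=\tfrac12(\partial_x+i\partial_y)\varphi^\C$. Write it as $A+B$, where $A$ collects those terms in which the derivative falls on $\chi$ and $B$ those in which it falls on the sum. Using $\partial_x\varphi^{(k)}=\varphi^{(k+1)}$ and $i\partial_y[(iy)^k/k!]=-(iy)^{k-1}/(k-1)!$, the two contributions inside $B$ telescope after reindexing $j=k-1$ in the second sum, and only the top-order term survives:
\[
2B\;=\;\chi\!\left(\frac{y}{\langle x\rangle}\right)\varphi^{(N+1)}(x)\frac{(iy)^N}{N!}.
\]
The symbol estimate $|\varphi^{(N+1)}(x)|\leq C\langle x\rangle^{\rho-N-1}$ together with the cone condition $|y|\leq c_2\langle x\rangle$ then gives
\[
|B|\;\leq\;c'\,\langle x\rangle^{\rho-N-1}|y|^N\;=\;c'\langle x\rangle^{\rho-1-l}|y|^l\left(\frac{|y|}{\langle x\rangle}\right)^{N-l}\;\leq\;c''\,\langle x\rangle^{\rho-1-l}|y|^l,
\]
since $N\geq l$. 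For the remainder $A$, note that $\partial_x(y/\langle x\rangle)$ and $\partial_y(y/\langle x\rangle)$ each carry a factor of $\langle x\rangle^{-1}$; combined with $|\varphi^{(k)}(x)|\leq C_k\langle x\rangle^{\rho-k}$ and $|y|^k\leq C\langle x\rangle^k$ on the annulus where $\chi'(y/\langle x\rangle)\neq 0$ (which forces $|y|\sim\langle x\rangle$), one obtains $|A|\leq C\langle x\rangle^{\rho-1}$; and since $|y|\sim\langle x\rangle$ on the support of $A$, this too is bounded by a constant multiple of $\langle x\rangle^{\rho-1-l}|y|^l$.

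The main obstacle is really Step~3: one must arrange the indices of the Taylor sum so that $\partial_x+i\partial_y$ produces the clean telescoping cancellation, reducing $\bar\partial\varphi^\C$ to the single boundary term $\varphi^{(N+1)}(x)(iy)^N/N!$ plus a manageable $\chi'$-piece. Everything else is bookkeeping with symbol seminorms and the cone condition $|y|\leq c_2\langle x\rangle$; the constant $c_1$ ends up depending on $\chi$ and on finitely many of the seminorms $\sup_x\langle x\rangle^{k-\rho}|\varphi^{(k)}(x)|$ for $k\leq N+1$, and $c_2$ depends only on the support of $\chi$.
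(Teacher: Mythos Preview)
The paper does not supply its own proof of this proposition; it merely states the result and cites \cite{DG} and \cite{Mo}. Your construction --- a cutoff-truncated Taylor expansion in the imaginary direction, followed by the telescoping computation of $\bar\partial$ and the separate treatment of the $\chi'$-piece on the annulus $|y|\sim\langle x\rangle$ --- is exactly the standard argument found in those references, and it is carried out correctly here. There is nothing to add.
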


\begin{theorem}[\cite{GoJ} and \cite{Mo}]
Let $k\in\N^*$ and $T$ a bounded operator in $C^k(B)$. Let $\rho<k$ and $\varphi\in\cs^\rho$. We have

\begin{equation}\label{eq: H-S formula}
[\varphi(B),T]=\sum^{k-1}_{j=1}\frac{1}{j!}\varphi^{(j)}(B)ad^j_B(T)+\frac{i}{2\pi}\int_\C \frac{\partial \varphi^\C}{\partial\bar{z}}(z-B)^{-k}ad^k_B(T)(z-B)^{-1} dz\wedge d\bar{z}
\end{equation}
\end{theorem}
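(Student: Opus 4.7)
The plan is to combine the Helffer-Sjöstrand representation
$\varphi(B) = \frac{i}{2\pi}\int_\C \partial_{\bar z}\varphi^\C \,(z-B)^{-1}\,dz\wedge d\bar z$
with an iterated resolvent-commutator identity. Throughout, write $R(z) = (z-B)^{-1}$. First I would fix an almost-analytic extension $\varphi^\C$, as furnished by the previous proposition, with the parameter $l$ chosen large enough that $|\partial_{\bar z}\varphi^\C(z)| \leq c\,\jap{\Re z}^{\rho-1-l}|\Im z|^l$ dominates the worst resolvent bound $\|R(z)^{k+1}\| \leq |\Im z|^{-(k+1)}$ near the real axis. Together with the hypothesis $\rho < k$, this forces every integral appearing below to be absolutely convergent in operator norm.

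Bringing the bounded operator $T$ inside the integral, Fubini yields
\[
[\varphi(B), T] = \frac{i}{2\pi}\int_\C \partial_{\bar z}\varphi^\C \,[R(z), T]\,dz\wedge d\bar z.
\]
The base case is the identity $[R(z), T] = -R(z)\,ad^1_B(T)\,R(z)$, obtained by sandwiching the relation $(z-B)T - T(z-B) = -ad^1_B(T)$ between two resolvents. For the induction, I would apply the same type of identity to the bounded operator $S = ad^j_B(T)$, namely $S\,R(z) = R(z)\,S + R(z)\,ad^1_B(S)\,R(z)$. Iterating $k-1$ times, with the boundedness of each $ad^j_B(T)$ for $1 \leq j \leq k$ coming from $T \in C^k(B)$, a straightforward induction produces
\[
R(z)\,ad^1_B(T)\,R(z) = \sum_{j=1}^{k-1} R(z)^{j+1}\,ad^j_B(T) + R(z)^k\,ad^k_B(T)\,R(z).
\]

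Substituting this back and pulling the $z$-independent operators $ad^j_B(T)$ out of the integral, the $j$-th summand reduces to $\bigl(\frac{i}{2\pi}\int_\C \partial_{\bar z}\varphi^\C \,R(z)^{j+1}\,dz\wedge d\bar z\bigr)\,ad^j_B(T)$. I would identify the parenthesised integral with $\frac{1}{j!}\varphi^{(j)}(B)$ (with the overall sign fixed by the chosen conventions for $\varphi^\C$ and $dz\wedge d\bar z$), either by applying the Helffer-Sjöstrand formula directly to $\varphi^{(j)}$, or by $j$-fold integration by parts in $z$ inside the representation of $\varphi(B)$, using $\partial_z^j R(z) = (-1)^j j!\,R(z)^{j+1}$ together with the fact that $\partial_z$ commutes with $\partial_{\bar z}$ on the smooth function $\varphi^\C$. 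The last term in the decomposition then gives the stated remainder $\frac{i}{2\pi}\int_\C \partial_{\bar z}\varphi^\C \,R(z)^k\,ad^k_B(T)\,R(z)\,dz\wedge d\bar z$.

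The main technical obstacle is the convergence bookkeeping that underlies every step: $l$ must be chosen large enough that $|\partial_{\bar z}\varphi^\C|$ compensates the blow-up $|\Im z|^{-(k+1)}$ near $\R$, while the assumption $\rho < k$ is precisely what controls the decay in $\Re z$ so that $\partial_{\bar z}\varphi^\C \cdot R(z)^{j+1}$ and the remainder $\partial_{\bar z}\varphi^\C \cdot R(z)^k\,ad^k_B(T)\,R(z)$ are norm-integrable against $dz\wedge d\bar z$. Once this is arranged, every Fubini exchange, every step of the iteration, and every factoring-out of a $z$-independent operator stays inside the space of absolutely convergent integrals, producing exactly \eqref{eq: H-S formula}.
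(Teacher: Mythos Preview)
The paper does not prove this theorem at all; it is merely recalled from the cited references \cite{GoJ} and \cite{Mo}. Your sketch is precisely the standard argument one finds there: represent $\varphi(B)$ by the Helffer--Sj\"ostrand integral, commute with $T$ under the integral using $[R(z),T]=-R(z)\,ad_B^1(T)\,R(z)$, iterate the resolvent identity $S\,R(z)=R(z)\,S+R(z)\,ad_B^1(S)\,R(z)$ to collect all resolvents on the left, and then identify $\frac{i}{2\pi}\int\partial_{\bar z}\varphi^\C\,R(z)^{j+1}\,dz\wedge d\bar z$ with $\tfrac{1}{j!}\varphi^{(j)}(B)$ via integration by parts in $z$. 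Your discussion of the convergence bookkeeping (choosing $l$ large enough to absorb $|\Im z|^{-(k+1)}$, and using $\rho<k$ for decay in $\Re z$) is exactly the technical core of the argument.

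One small caveat worth flagging: with the paper's convention $ad_B^1(T)=[T,B]$, your base identity $[R(z),T]=-R(z)\,ad_B^1(T)\,R(z)$ introduces a global minus sign that propagates through the expansion, so the formula as printed in the paper is only correct up to a sign (or, equivalently, up to the ordering of $(z-B)^{-k}$ and $(z-B)^{-1}$ around $ad_B^k(T)$ in the remainder). Your parenthetical remark about fixing signs by convention is therefore well placed; the mismatch lies in the statement rather than in your argument.
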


In the general case, the rest of the previous expansion is difficult to calculate. So we will give an estimate of this rest.

\begin{proposition}[\cite{GoJ} and \cite{Mo}]\label{prop: estimate H-S}
Let $T\in C^k(A)$ be a self-adjoint and bounded operator. Let $\varphi\in\cs^\rho$ with $\rho<k$. Let 
\[I_k(\varphi)=\int_\C \frac{\partial \varphi^\C}{\partial\bar{z}}(z-B)^{-k}ad^k_B(T)(z-B)^{-1} dz\wedge d\bar{z}\]
 be the rest of the development of order $k$ in \eqref{eq: H-S formula}. Let $s,s'>0$ such that $s'<1$, $s<k$ and $\rho+s+s'<k$. Then $\langle B \rangle^s I_k(\varphi)\langle B\rangle^{s'}$ is bounded.
\end{proposition}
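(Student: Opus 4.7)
The plan is to bound the operator norm of the integrand pointwise in $z$ and then integrate, exploiting the decay of $\partial\varphi^\C/\partial\bar z$ off the real axis against the mild blowup of the resolvent factors.

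First I would move the weights next to the resolvents. Since $\langle B\rangle^s$ and $\langle B\rangle^{s'}$ are functions of the self-adjoint operator $B$, they commute with $(z-B)^{-k}$ and $(z-B)^{-1}$, so
\[
\langle B\rangle^s I_k(\varphi)\langle B\rangle^{s'}
=\int_\C \frac{\partial\varphi^\C}{\partial\bar z}\,\bigl[\langle B\rangle^s(z-B)^{-k}\bigr]\,\mathrm{ad}_B^k(T)\,\bigl[(z-B)^{-1}\langle B\rangle^{s'}\bigr]\,dz\wedge d\bar z.
\]

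Next, by the functional calculus for the self-adjoint $B$,
\[
\|\langle B\rangle^s(z-B)^{-k}\|=\sup_{\lambda\in\R}\frac{\langle\lambda\rangle^s}{|z-\lambda|^k},
\qquad
\|(z-B)^{-1}\langle B\rangle^{s'}\|=\sup_{\lambda\in\R}\frac{\langle\lambda\rangle^{s'}}{|z-\lambda|}.
\]
I would estimate each by splitting at $|\lambda|=2|z|$: on $\{|\lambda|\le 2|z|\}$ use $\langle\lambda\rangle\lesssim\langle z\rangle$ and $|z-\lambda|\ge|\Im z|$; on $\{|\lambda|>2|z|\}$ use $|z-\lambda|\ge|\lambda|/2$, so the ratio is $\lesssim\langle\lambda\rangle^{s-k}$, respectively $\langle\lambda\rangle^{s'-1}$, which are bounded because $s\le k$ (note $s<k$ since $\rho+s+s'<k$ and $s'>0$, or in case $\rho<0$ one still checks $s<k$ directly from the hypotheses) and $s'<1$. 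This yields
\[
\|\langle B\rangle^s(z-B)^{-k}\|\le C\,\frac{\langle z\rangle^s}{|\Im z|^k},\qquad
\|(z-B)^{-1}\langle B\rangle^{s'}\|\le C\,\frac{\langle z\rangle^{s'}}{|\Im z|}.
\]
Since $T\in C^k(B)$, the iterated commutator $\mathrm{ad}_B^k(T)$ is bounded, so the integrand has norm bounded by
\[
C\left|\frac{\partial\varphi^\C}{\partial\bar z}\right|\frac{\langle z\rangle^{s+s'}}{|\Im z|^{k+1}}.
\]

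Finally I would insert the almost-analytic bounds $\bigl|\partial\varphi^\C/\partial\bar z\bigr|\le c_1\langle\Re z\rangle^{\rho-1-l}|\Im z|^l$ with a choice $l>k$, together with the support condition $|\Im z|\le c_2\langle\Re z\rangle$, on which $\langle z\rangle\sim\langle\Re z\rangle$. Writing $z=x+\rmi y$, the integrand's norm is dominated by $C\langle x\rangle^{\rho-1-l+s+s'}|y|^{l-k-1}$. The inner integral $\int_{|y|\le c_2\langle x\rangle}|y|^{l-k-1}\,dy$ converges (since $l>k$) and produces a factor $\lesssim\langle x\rangle^{l-k}$, so the $x$-integral behaves like $\int\langle x\rangle^{\rho-1-k+s+s'}\,dx$, which converges precisely because $\rho+s+s'<k$.

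The main obstacle is the weighted-resolvent estimate $\|\langle B\rangle^s(z-B)^{-k}\|\le C\langle z\rangle^s/|\Im z|^k$: this is exactly where the hypothesis $s\le k$ is used (through the $|\lambda|>2|z|$ regime), and it is crucial that this bound has no extra $\langle z\rangle$-growth beyond the spectral weight. Everything else is a routine exercise of using the bounds on $\partial\varphi^\C/\partial\bar z$ with an appropriate choice of $l$.
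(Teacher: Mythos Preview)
The paper does not actually prove this proposition: it is quoted from the references \cite{GoJ} and \cite{Mo} and stated without proof in the appendix, so there is no ``paper's own proof'' to compare against. That said, your argument is correct and is essentially the standard one found in those references: commute the weights through to the resolvents, use the elementary spectral bound $\|\langle B\rangle^a(z-B)^{-m}\|\lesssim \langle z\rangle^a|\Im z|^{-m}$ for $0\le a\le m$, and then integrate against the almost-analytic decay with a suitable choice of $l>k$. One small remark: your parenthetical justification of $s<k$ via $\rho+s+s'<k$ is unnecessary (and would fail when $\rho<0$); $s<k$ is simply part of the hypotheses, so you can invoke it directly.
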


In particular, if $\rho<0$, and if we choose $s'$ near $0$, we have $\langle B \rangle^s I_k(\varphi)\langle B\rangle^{s'}$ bounded, for all $s<k-s'-\rho$.

{\bf Acknowledgements.} I thank my doctoral supervisor, Thierry Jecko, for fruitfull discussions and comments. 

\bibliographystyle{alpha}
\bibliography{bibliographie}

\end{document}